\newcommand\change[1]
\def\deg{{\rm deg}}
\def\ac{{\overline{\rm ac}}}
\def\11{{\mathbf 1}}
\def\AA{{\mathbb A}}
\def\CC{{\mathbb C}}
\def\LL{{\mathbb L}}
\def\NN{{\mathbb N}}
\def\QQ{{\mathbb Q}}
\def\ZZ{{\mathbb Z}}
\def\cC{{\mathscr C}}
\def\cL{{\mathcal L}}
\def\cP{{\mathcal P}}
\def\cQ{{\mathcal Q}}
\def\cT{{\mathcal T}}
\newcommand{\Spec}{\ensuremath{\mathrm{Spec}\,}}
\newcommand{\Lie}{\ensuremath{\mathrm{Lie}}}
\newcommand{\ACF}{\ensuremath{\mathrm{ACF}}}
\newcommand{\redu}{\ensuremath{\mathrm{red}}}
\newcommand{\rig}{\ensuremath{\mathrm{rig}}}
\newcommand{\ord}{\ensuremath{\mathrm{ord}}}
\newcommand{\spl}{\ensuremath{\mathrm{sp}}}
\newcommand{\rb}{\ensuremath{\mathrm{b}}}
\newcommand{\SA}{\ensuremath{\mathrm{SAb}}}
\newcommand{\Var}{\mathrm{Var}}
\DeclareMathOperator*{\vdim}{dim}
\newcommand{\KVar}[1]{\ensuremath{K_0(\Var_{#1})}}
\newcommand{\Kmod}[1]{\ensuremath{K_0^{\mathrm{mod}}(\Var_{#1})}}
\newcommand{\KR}{\ensuremath{K^R_0(\Var_{k})}}
\newcommand{\MRloc}{\ensuremath{\mathcal{M}^R_{k}}}
\newcommand{\Mr}{\ensuremath{\mathcal{M}_k^+}}
\newcommand{\mX}{\ensuremath{\mathcal{X}}}
\newtheorem{theorem}[subsubsection]{Theorem}
\newtheorem{lem}[subsubsection]{Lemma}
\newtheorem{cor}[subsubsection]{Corollary}
\newtheorem{prop}[subsubsection]{Proposition}
\newtheorem{conjecture}[subsection]{Conjecture}
\newtheorem*{conjecture*}{Conjecture}
\theoremstyle{definition}
\newtheorem{definition}[subsubsection]{Definition}
\newtheorem{example}[subsubsection]{Example}
\newtheorem{def-prop}[subsubsection]{Proposition-Definition}
\newtheorem{def-theorem}[subsubsection]{Theorem-Definition}
\newtheorem{def-lem}[subsubsection]{Lemma-Definition}
\theoremstyle{remark}
\newtheorem{remark}[subsubsection]{Remark}
\newtheorem{question}[subsubsection]{Question}
\theoremstyle{plain}
\numberwithin{equation}{subsection}
\def\boxit#1#2{\setbox1=\hbox{\kern#1{#2}\kern#1}%
\dimen1=\ht1 \advance\dimen1 by #1
\dimen2=\dp1 \advance\dimen2 by #1
\setbox1=\hbox{\vrule height\dimen1 depth\dimen2\box1\vrule}%
\setbox1=\vbox{\hrule\box1\hrule}%
\advance\dimen1 by .4pt \ht1=\dimen1
\advance\dimen2 by .4pt \dp1=\dimen2 \box1\relax}
\begin{document}

\title[Chai's Conjecture and Fubini properties]
{Chai's Conjecture and Fubini properties of dimensional motivic
integration}

\author{Raf Cluckers}

\address{Universit\'e Lille 1, Laboratoire Painlev\'e, CNRS - UMR 8524, Cit\'e Scientifique, 59655
Villeneuve d'Ascq Cedex, France, and Katholieke Universiteit
Leuven, Department of Mathematics, Celestijnenlaan 200B, B-3001
Leu\-ven, Bel\-gium} \email{Raf.Cluckers@math.univ-lille1.fr}
\urladdr{http://math.univ-lille1.fr/$\sim$cluckers}

\author{Fran\c cois Loeser}
\address{Institut de Math\'ematiques de Jussieu,
UMR 7586 du CNRS,
Universit\'e Pierre et Marie Curie, Paris, France}
\email{Loeser@math.jussieu.fr}
\urladdr{http://www.math.jussieu.fr/$\sim$loeser/}


\author{Johannes Nicaise}
\address{Katholieke Universiteit Leuven, Department of Mathematics,
Celestijnenlaan 200B, B-3001 Leu\-ven, Bel\-gium}
\email{Johannes.Nicaise@wis.kuleuven.be}
\urladdr{http://wis.kuleuven.be/algebra/nicaise/}

\begin{abstract}
We prove that a conjecture of Chai on the additivity of the base
change conductor for semi-abelian varieties over a discretely
valued field is equivalent to a Fubini property for the
 dimensions of certain motivic integrals. We prove this Fubini
property when the valued field has characteristic zero.
\end{abstract}

\maketitle

\renewcommand{\partname}{}

\section{Introduction}
Let $R$ be a henselian discrete valuation ring with quotient field
$K$ and perfect residue field $k$. Let $G$ be a semi-abelian
variety over $K$, i.e., an extension of an abelian $K$-variety by
a $K$-torus. Then $G$ can be canonically extended to a smooth
separated commutative group scheme $\mathcal{G}$ over $R$, the
so-called N\'eron $lft$-model of $G$
 \cite[10.1.1]{neron}.  We say that $G$ has semi-abelian reduction if the
 identity component of the special fiber of $\mathcal{G}$ is a
 semi-abelian $k$-variety.

 In \cite{chai}, Chai introduced the {\em base change conductor}
 $c(G)$ of $G$, a positive rational number that measures the defect
 of semi-abelian reduction of $G$. Its precise definition is recalled in Definition \ref{def-chai}. The base change conductor
 vanishes if and only if $G$ has semi-abelian reduction. For algebraic tori, this invariant had previously been defined and
 studied by Chai and Yu \cite{chai-yu}. They proved the deep result that the base
 change conductor of a $K$-torus $T$ is invariant under isogeny. Applying an argument from \cite{GG}, they deduced that $c(T)$ equals one half of the Artin
 conductor of the cocharacter module of $T$. For semi-abelian varieties,
 however, no similar cohomological interpretation is known to hold in general; in fact, the base change conductor is not even invariant under isogeny \cite[\S6.10]{chai},
  and many of its properties remain mysterious. One of the main open
  questions is the following conjecture, formulated by Chai in
  \cite[\S8.1]{chai}.
\begin{conjecture}[Chai]\label{conj-chai}
Let $G$ be a semi-abelian $K$-variety which fits into an exact
sequence of algebraic $K$-groups
$$0\to T\to G\to A\to 0$$ with $T$ a $K$-torus and $A$ an
abelian $K$-variety.
 Then we have
$$c(G)=c(A)+c(T).$$
\end{conjecture}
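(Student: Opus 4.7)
The plan is to reformulate the additivity $c(G)=c(A)+c(T)$ motivically and then deduce it from a Fubini theorem for the dimension of motivic integrals along the fibration $\pi\colon G\to A$.

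First, I would express each base change conductor $c(H)$ as (the negative of) the dimension of a motivic integral. Concretely, pick a finite separable extension $K'/K$ with ramification degree $e$ over which $H$ acquires semi-abelian reduction, and fix a N\'eron differential $\omega_H$ on $\mathcal{H}$. The comparison between the pullback of $\omega_H$ to the N\'eron model of $H_{K'}$ and a N\'eron differential there has valuation $e\cdot c(H)$. By the change-of-variables formula in motivic integration, this valuation agrees with the order of the motivic integral $\int |\omega_H|$ taken over a suitable definable subset attached to $\mathcal{H}$; thus $c(H)$ is encoded in the dimension of a motivic measure.

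Second, using the exact sequence $0\to T\to G\to A\to 0$, I would pick compatible N\'eron differentials so that, up to units, $\omega_G=\pi^{\ast}\omega_A\wedge\omega_T$, where $\omega_T$ restricts to a N\'eron differential on each fiber $\pi^{-1}(a)\cong T$. The motivic integral computing $c(G)$ then factors, at least formally, as an iterated integral: first over the fibers $T$, then over the base $A$. If the dimension of motivic integrals satisfies a Fubini property along $\pi$, the order of the total integral equals the sum of the orders of the fiber and base integrals, which are $c(T)$ and $c(A)$ respectively, yielding $c(G)=c(A)+c(T)$.

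The main obstacle is the Fubini property itself. Full Fubini at the level of motivic volumes in a Grothendieck-type ring of varieties is known to be delicate, but one may hope that the dimension of the motivic volume alone satisfies Fubini. Developing this \emph{dimensional motivic integration} requires working in a Grothendieck-type semigroup of definable sets modulo a dimension-preserving equivalence, proving a cell decomposition that reduces the Fubini statement to the rectangular case, and exploiting quantifier elimination \`a la Denef--Pas together with resolution of singularities to control singular loci and projections. All of these ingredients are available when the valued field has characteristic zero, which is the case treated here; the equicharacteristic zero assumption is therefore expected to enter precisely at the step of proving the dimensional Fubini theorem, not at the reduction of Chai's conjecture to it.
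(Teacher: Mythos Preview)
Your broad strategy---reduce additivity to a Fubini identity for the \emph{dimension} of motivic integrals, then build a theory of dimensional motivic integration in which such a Fubini theorem holds---is exactly the paper's. The gap is in how you set up the reduction. In your second step you write that one can ``pick compatible N\'eron differentials so that, up to units, $\omega_G=\pi^{\ast}\omega_A\wedge\omega_T$.'' If ``units'' means units of $R$, this is precisely equivalent to the conjecture: the paper shows (Proposition~\ref{prop-add}) that if $\varpi^{\gamma}(\omega_T\otimes\omega_A)$ generates $\Omega_G$ then $c(G)=c(T)+c(A)+\gamma$, so assuming $\gamma=0$ is circular. Relatedly, in your first step you want $c(H)$ to appear as the order of $\int_H|\omega_H|$; but for the N\'eron differential $\omega_H$ one always has $\vdim\int_H|\omega_H|=0$ (Proposition~\ref{prop-vdim}), so this integral carries no information about $c(H)$ by itself.

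The paper's fix is to integrate the \emph{wrong} form over $G$: one computes $\vdim\int_G|\omega_T\otimes\omega_A|$ two ways. On the one hand, since $\omega_T\otimes\omega_A=\varpi^{-\gamma}\omega_G$, Proposition~\ref{prop-vdim} gives this virtual dimension as $\gamma=c(G)-c(T)-c(A)$. On the other hand, the form $\omega_T\otimes\omega_A$ is tailor-made for Fubini along $\pi$: along each fibre it restricts to the N\'eron differential $\omega_T$ and the Gelfand--Leray quotient on the base is $\omega_A$, so dimensional Fubini yields $\vdim\int_T|\omega_T|+\vdim\int_A|\omega_A|=0+0=0$. Equating the two gives $\gamma=0$. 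Two further points you omit: one needs $\pi\colon G(K)\to A(K)$ to be surjective for the base integral to really be over all of $A(K)$, which uses $H^1(K,T)=0$; and resolution of singularities plays no role in the dimensional Fubini theorem---the actual engine is Denef--Pas style cell decomposition and the Jacobian property, which is where the characteristic-zero hypothesis on $K$ enters.
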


 The fundamental difficulty underlying this conjecture is that an
 exact sequence of semi-abelian varieties does not give rise to an
 exact sequence of N\'eron $lft$-models, in general.
Chai proved the conjecture if $k$ is finite, using Fubini's
theorem for integrals with respect to the Haar measure on the
completion of $K$. He also proved the conjecture when $K$ has
mixed characteristic, using a different method and applying the
property that $c(T)$ only depends on the isogeny class of $T$. If
$k$ has characteristic zero (more generally, if $G$ obtains
semi-abelian reduction after a {\em tame} finite extension of
$K$), Chai's conjecture can be proven in an elementary way; see
\cite[4.23]{HaNi}.

 \medskip
 In the first part of the present paper, we show that, in arbitrary characteristic,
  Chai's conjecture is equivalent to a Fubini property for the dimensions of certain
{\em motivic} integrals (equation \eqref{eq-fubini} in Theorem
\ref{thm-fubini}). We then prove in the second part of the paper
that this Fubini property holds when $K$ has characteristic zero
(Theorem \ref{holds}). This yields a new proof of the conjecture
in that case, which is close in spirit to Chai's proof of the
finite residue field case.

The strength of
our approach lies in the fact that we combine \change{insights of} two theories of
motivic integration, namely, the geometric theory of motivic
integration on rigid varieties of Loeser and Sebag \cite{motrigid}
and the
model-theoretic approach of Cluckers and Loeser \cite{CL,mimix}.
Let us emphasize that the
Fubini property in \eqref{eq-fubini}  is not an \change{immediate corollary of the Fubini results} from \cite{mimix}; see Remark
\ref{rem-fubini}. We need to combine the theory in \cite{mimix}
with a new result (Theorem \ref{badim} and its corollary), which roughly states that
the virtual dimension of a motivic integral \change{over a fixed space} only depends on the
dimensions of the values of the integrand. This theorem may be of
independent interest.

 We hope that our reformulation
of Chai's conjecture in terms of motivic integrals will also shed
new light on the open case of the conjecture, when $k$ is
infinite, $K$ has positive characteristic and $G$ is wildly
ramified.

\medskip{\small During the preparation of this paper, the research of the authors
has received funding from the European Research Council under the European Community's Seventh Framework Programme (FP7/2007-2013) / ERC Grant Agreement nr. 246903 NMNAG and from the Fund for Scientific Research - Flanders (G.0415.10).}

\section{Preliminaries}
\subsection{Notation}
Throughout this article, $R$ denotes a Henselian discrete
valuation ring with quotient field $K$ and perfect residue field
$k$. We denote by $\mathfrak{m}$ the maximal ideal of $R$, by
$R^{sh}$ a strict henselization of $R$ and by $K^{sh}$ its field
of fractions. The residue field $k^s$ of $R^{sh}$ is an algebraic
closure of $k$. We denote by $\widehat{R}$ the $\mathfrak{m}$-adic
completion of $R$ and by $\widehat{K}$ its field of fractions.

For every ring $A$, we denote by $(\mathrm{Sch}/A)$ the category
of $A$-schemes. We consider the {\em special fiber functor}
$$(\cdot)_k:(\mathrm{Sch}/R)\to (\mathrm{Sch}/k):\mathcal{X}\mapsto
\mathcal{X}_k=X\times_R k$$ and the {\em generic fiber functor}
$$(\cdot)_K:(\mathrm{Sch}/R)\to (\mathrm{Sch}/K):\mathcal{X}\mapsto
\mathcal{X}_K=\mathcal{X}\times_R K.$$
 A variety over a ring $A$ is a reduced separated $A$-scheme of
finite type.


\subsection{N\'eron models and semi-abelian reduction}
A semi-abelian variety over a field $F$ is an extension of an
abelian $F$-variety by an algebraic $F$-torus. Let $G$ be a
semi-abelian variety over $K$. It follows from
\cite[10.2.2]{neron} that $G$ admits a N\'eron $lft$-model
$\mathcal{G}$ in the sense of \cite[10.1.1]{neron}. It is the
minimal extension of $G$ to a smooth separated group scheme over
$R$. We say that $G$ has semi-abelian reduction if the identity
component $\mathcal{G}^o_k$ of the special fiber of $\mathcal{G}$
is a semi-abelian $k$-variety. There always exists a finite
separable extension $L$ of $K$ such that $G\times_K L$ has
semi-abelian reduction. If $G$ is an abelian variety, then this is
Grothendieck's  Semi-Stable Reduction Theorem
\cite[IX.3.6]{sga7a}. If $G$ is a torus, then one can take for $L$
the splitting field of 
 $G$. The general
case is easily deduced from these special cases; see
\cite[3.11]{HaNi-compseries}.

Let $K'$ be a finite separable extension of $K$, and denote by
$R'$ the integral closure of $R$ in $K'$. We set $G'=G\times_K K'$
and we denote by $\mathcal{G}'$ the N\'eron $lft$-model of $G'$.
By the universal property of the N\'eron $lft$-model, there exists
a unique morphism of $R'$-schemes \begin{equation}\label{eq-h}
h:\mathcal{G}\times_R R'\rightarrow \mathcal{G}'
\end{equation}
that extends the natural isomorphism between the generic fibers.
 If $G$ has semi-abelian reduction, then $h$ is an open immersion \cite[3.1(e)]{sga7a}, which induces an
 isomorphism
$$(\mathcal{G}\times_R R')^o\rightarrow (\mathcal{G}')^o$$
between the identity components of $\mathcal{G}\times_R R'$ and
$\mathcal{G}'$ \cite[VI$_B$.3.11]{sga3.1}.
\subsection{The base change conductor}
Let $G$ be a semi-abelian variety over $K$.
 Let $K'$ be a finite separable extension of $K$ such that
$G'=G\times_K K'$ has semi-abelian reduction, and denote by
$e(K'/K)$ the ramification index of $K'$ over $K$. The morphism
\eqref{eq-h} induces an injective morphism
\begin{equation}\label{eq-Lie}
\Lie(h):\Lie(\mathcal{G})\otimes_R R'\rightarrow
\Lie(\mathcal{G}')\end{equation} of free $R'$-modules of rank
$\dim(G)$.

\begin{definition}[Chai \cite{chai}, Section 1]\label{def-chai}
The base change conductor of $G$ is defined by
$$c(G)=\frac{1}{e(K'/K)}\cdot \mathrm{length}_{R'}\left(\mathrm{coker}(\Lie(h))\right).$$
\end{definition}
This definition does not depend on the choice of $K'$. The base
change conductor is a positive rational number that vanishes if
and only if $G$ has semi-abelian reduction \cite[4.16]{HaNi}. One
can view $c(G)$ as a measure for the defect of semi-abelian
reduction of $G$.


\subsection{A generalization of Chai's conjecture}
In \cite[8.1]{chai}, Chai asks whether Conjecture \ref{conj-chai}
can be generalized as follows.

\begin{question}\label{ques-chai}
 Do we have
$c(G_2)=c(G_1)+c(G_3)$ for every exact sequence of semi-abelian
$K$-varieties $$0\to G_1\to G_2\to G_3\to 0?$$
\end{question}

If $G_1$, $G_2$ and $G_3$ are tori, this can be easily deduced
from the deep fact that the base change conductor of a torus is
one half of the Artin conductor of the cocharacter module
\cite{chai-yu}, in the following way.
\begin{prop}\label{prop-tori}
Let $$0\to G_1\to G_2\to G_3\to 0$$ be an exact sequence of
$K$-tori. Then $c(G_2)=c(G_1)+c(G_3)$.
\end{prop}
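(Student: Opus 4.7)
The plan is to reduce the statement to the additivity of the Artin conductor on short exact sequences of Galois representations, via the Chai--Yu formula recalled just before the proposition.

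First, I would attach to each $K$-torus $T$ its cocharacter module $X_*(T) = \Hom_{K^s}(\mathbb{G}_{m,K^s}, T_{K^s})$, a finitely generated free $\mathbb{Z}$-module on which the absolute Galois group $\Gamma = \mathrm{Gal}(K^s/K)$ acts continuously (the action factors through $\mathrm{Gal}(L/K)$ for any finite Galois splitting field $L$ of $T$). The functor $T \mapsto X_*(T)$ is an equivalence between the category of $K$-tori and the category of continuous $\Gamma$-modules which are free of finite rank over $\mathbb{Z}$; in particular, it is exact. Hence the short exact sequence
$$0 \to G_1 \to G_2 \to G_3 \to 0$$
induces a short exact sequence of continuous $\Gamma$-modules
$$0 \to X_*(G_1) \to X_*(G_2) \to X_*(G_3) \to 0.$$

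Next, let $a(M)$ denote the Artin conductor of a continuous $\Gamma$-module $M$ that is finitely generated free over $\mathbb{Z}$ (equivalently, one works with $M \otimes_{\mathbb{Z}} \mathbb{Q}_\ell$ for $\ell$ coprime to the residue characteristic, and the definition is insensitive to this choice since we consider a $\mathbb{Z}$-lattice). The Artin conductor is additive on short exact sequences of such Galois representations, as one sees directly from its definition in terms of higher ramification groups (the character of a middle term is the sum of the characters of the outer terms). Thus
$$a(X_*(G_2)) = a(X_*(G_1)) + a(X_*(G_3)).$$

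Finally, I would invoke the Chai--Yu theorem \cite{chai-yu}, which asserts that for every $K$-torus $T$,
$$c(T) = \tfrac{1}{2}\, a(X_*(T)).$$
Dividing the additivity relation above by $2$ and applying this formula termwise yields $c(G_2) = c(G_1) + c(G_3)$, as desired.

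The only nontrivial input is the Chai--Yu formula itself, which is assumed as a known deep result; everything else is formal. Hence there is no serious obstacle in this argument — the whole content has been encapsulated in the cohomological interpretation of $c(T)$, which fails for general semi-abelian varieties and is precisely what makes Conjecture \ref{conj-chai} difficult in the mixed case treated in the remainder of the paper.
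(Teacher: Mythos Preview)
Your proposal is correct and follows essentially the same approach as the paper: exactness of the cocharacter functor, additivity of the Artin conductor, and the Chai--Yu formula. The only cosmetic difference is that the paper justifies additivity of the Artin conductor by tensoring with $\QQ$ to obtain a \emph{split} exact sequence of $\QQ[\mathrm{Gal}(L/K)]$-modules (so the middle term is a direct sum), whereas you invoke additivity directly via characters; both are valid.
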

\begin{proof} The sequence of cocharacter modules
$$0\to X_\bullet(G_1)\to X_\bullet(G_2)\to X_\bullet(G_3)\to 0$$ is exact. Tensoring with
$\QQ$, we get a split exact sequence of
$\QQ[\mathrm{Gal}(L/K)]$-modules
$$0\to X_\bullet(G_1)\otimes_{\ZZ}\QQ\to X_\bullet(G_2)\otimes_{\ZZ}\QQ\to X_\bullet(G_3)\otimes_{\ZZ}\QQ\to 0$$
 where $L$
is the 
 splitting field of $G_2$. Thus the
Artin conductor of $X_{\bullet}(G_2)\otimes_{\ZZ}\QQ$ is the sum
of the Artin conductors of $X_{\bullet}(G_1)\otimes_{\ZZ}\QQ$ and
$X_{\bullet}(G_3)\otimes_{\ZZ}\QQ$. Since the base change
conductor of a torus is one half of the Artin conductor of the
cocharacter module \cite{chai-yu}, we find that
$c(G_2)=c(G_1)+c(G_3)$.
\end{proof}
\begin{cor}
If Conjecture \ref{conj-chai} holds, then Question \ref{ques-chai}
has a positive answer when $G_1$ is a torus.
\end{cor}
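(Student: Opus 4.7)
The plan is to reduce the statement for an extension with torus kernel to Chai's conjecture (for semi-abelian varieties as extensions of abelian by torus) plus Proposition \ref{prop-tori} (the torus additivity) by comparing Chevalley-type decompositions.

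Fix an exact sequence $0 \to G_1 \to G_2 \to G_3 \to 0$ of semi-abelian $K$-varieties with $G_1$ a torus. Each of $G_2$ and $G_3$ admits a canonical maximal subtorus; write these as $T_2 \subset G_2$ and $T_3 \subset G_3$, with abelian quotients $A_2 = G_2/T_2$ and $A_3 = G_3/T_3$. First I would observe that the composition $G_1 \hookrightarrow G_2 \twoheadrightarrow A_2$ is a $K$-homomorphism from a torus to an abelian variety, hence trivial; therefore $G_1 \subset T_2$. Since a closed subgroup of a torus that is itself a torus is a subtorus, $G_1$ is a subtorus of $T_2$, and $T_2/G_1$ is again a torus.

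Next I would take the quotient by $G_1$ of the defining sequence $0 \to T_2 \to G_2 \to A_2 \to 0$; the snake lemma (or a direct verification) produces an exact sequence
\begin{equation*}
0 \to T_2/G_1 \to G_3 \to A_2 \to 0.
\end{equation*}
Since $A_2$ is abelian and $T_2/G_1$ is a torus, the universal property of the maximal subtorus forces $T_3 = T_2/G_1$ and $A_3 = A_2$. This identifies the Chevalley pieces of $G_3$ in terms of those of $G_2$ and the given subtorus $G_1$.

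Now I would apply Conjecture \ref{conj-chai} to both $G_2$ and $G_3$ to obtain
\begin{equation*}
c(G_2) = c(T_2) + c(A_2), \qquad c(G_3) = c(T_2/G_1) + c(A_2),
\end{equation*}
and Proposition \ref{prop-tori} to the exact sequence of tori $0 \to G_1 \to T_2 \to T_2/G_1 \to 0$ to obtain
\begin{equation*}
c(T_2) = c(G_1) + c(T_2/G_1).
\end{equation*}
Subtracting the second conductor identity from the first and substituting yields $c(G_2) - c(G_3) = c(T_2) - c(T_2/G_1) = c(G_1)$, which is the desired additivity.

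The only conceptual step that is not completely mechanical is the identification $T_3 = T_2/G_1$; everything else is a diagram chase combined with the two input results. Even this step is quite standard, relying only on the triviality of homomorphisms from tori to abelian varieties and the uniqueness of the Chevalley decomposition of a semi-abelian variety, so I do not expect any serious obstacle in the proof.
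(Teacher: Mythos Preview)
Your argument is correct and follows essentially the same route as the paper: both proofs identify the maximal subtori and abelian parts of $G_2$ and $G_3$, establish that $G_1$ sits inside $(G_2)_{\mathrm{tor}}$ with quotient $(G_3)_{\mathrm{tor}}$ and that $(G_2)_{\mathrm{ab}}\cong (G_3)_{\mathrm{ab}}$, and then combine Conjecture~\ref{conj-chai} applied to $G_2$ and $G_3$ with Proposition~\ref{prop-tori} applied to the resulting exact sequence of tori. The only cosmetic difference is that the paper builds the fiber product $\widetilde{G}_2=(G_3)_{\mathrm{tor}}\times_{G_3}G_2$ and then identifies it with $(G_2)_{\mathrm{tor}}$, whereas you argue directly that $G_1\subset T_2$ and take quotients; these are two presentations of the same identification.
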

\begin{proof}
Assume that $G_1$ is a torus. For every semi-abelian $K$-variety
$G$, we denote by $G_{\mathrm{tor}}$ its maximal subtorus and by
$G_{\mathrm{ab}}=G/G_{\mathrm{tor}}$ its abelian part. We consider
the closed subgroup
$\widetilde{G}_2=(G_3)_{\mathrm{tor}}\times_{G_3}G_2$ of $G_2$.
 We have a short exact sequence of $K$-groups
\begin{equation}\label{eq-tori}
0\to G_1\to \widetilde{G}_2\to (G_3)_{\mathrm{tor}}\to
0\end{equation} so that $\widetilde{G}_2$ is an extension of
$K$-tori, and thus a torus. Moreover, the morphism
$$G_2/\widetilde{G}_2\to G_3/(G_3)_{\mathrm{tor}}=(G_3)_{\mathrm{ab}}$$ is an isomorphism,
so that $\widetilde{G}_2=(G_2)_{\mathrm{tor}}$ and
$(G_2)_{\mathrm{ab}}\cong (G_3)_{\mathrm{ab}}$. By Conjecture
\ref{conj-chai}, we have
$c(G_i)=c((G_i)_{\mathrm{tor}})+c((G_i)_{\mathrm{ab}})$ for
$i=2,3$. Applying Proposition \ref{prop-tori} to the sequence
\eqref{eq-tori}, we find that
$c((G_2)_{\mathrm{tor}})=c(G_1)+c((G_3)_{\mathrm{tor}})$. It
follows that $c(G_2)=c(G_1)+c(G_3)$.
\end{proof}
Below, we will follow a different approach. We will use the
invariance of the base change conductor of a torus under isogeny
to reduce Question \ref{ques-chai} to the case where the maximal
split subtorus $(G_3)_{\spl}$ of $G_3$ is trivial (of course, this
is always the case if $G_3$ is an abelian variety as in Conjecture
\ref{conj-chai}). Then we prove that, if $G_1$ is a torus and
$(G_3)_{\spl}$ is trivial, the additivity property of the base
change conductor in Question \ref{ques-chai} is equivalent to a
certain Fubini property for motivic integrals. We prove this
Fubini property when $K$ has characteristic zero. These arguments
do not use the invariance of the base change conductor of a torus
under isogeny.

\section{Motivic Haar measures on semi-abelian varieties}
%

\subsection{The Grothendieck ring of
varieties}\label{subsec-Groth} Let $F$ be a field.  We denote by
$\KVar{F}$ the {\it Grothendieck ring of varieties over} $F$. As
an abelian group, $\KVar{F}$ is defined by the following
presentation:
\begin{itemize}
\item {\em generators:} isomorphism classes $[X]$ of separated
$F$-schemes of finite type $X$,
 \item {\em relations:} if $X$ is
a separated $F$-scheme of finite type and $Y$ is a closed
subscheme of $X$, then
$$[X]=[Y]+[X\setminus Y].$$
These relations are called {\em scissor relations}.
\end{itemize}
By the scissor relations, one has $[X]=[X_{\redu}]$  for every
separated $F$-scheme of finite type $X$, where $X_{\redu}$ denotes
the maximal reduced closed subscheme of $X$. We endow the group
$\KVar{F}$ with the unique ring structure such that
$$[X]\cdot [X']=[X\times_F X']$$ for all separated $F$-schemes of finite type $X$
and $X'$. The identity element for the multiplication is the class
$[\Spec F]$ of the point. To any constructible subset $C$ of a
separated $F$-scheme of finite type $X$, one can associate an
element $[C]$ in $\KVar{F}$ by choosing a finite partition of $C$
into subvarieties $C_1,\ldots,C_r$ of $X$ and setting
$[C]=[C_1]+\ldots+[C_r]$. The scissor relations imply that this
definition does not depend on the choice of the partition. For a
detailed survey on the Grothendieck ring of varieties, we refer to
\cite{NiSe-K0}.

We denote by $\Kmod{F}$ the {\em modified Grothendieck ring of
varieties over} $F$ \cite[\S3.8]{NiSe-K0}. This is the quotient of
$\KVar{F}$ by the ideal $\mathcal{I}_F$ generated by elements of
the form $[X]-[Y]$ where $X$ and $Y$ are separated $F$-schemes of
finite type such that there exists a finite, surjective, purely
inseparable $F$-morphism $Y\to X.$ If $F$ has characteristic zero,
then it is easily seen that $\mathcal{I}_F$ is the zero ideal
\cite[3.11]{NiSe-K0}, so that $\KVar{F}=\Kmod{F}$. It is not known
if $\mathcal{I}_F$ is non-zero if $F$ has positive characteristic.
In particular, if $F'$ is a non-trivial finite purely inseparable
extension of $F$, it is not known whether $[\Spec F']\neq 1$ in
$\KVar{F}$.

There exists a canonical isomorphism from $\Kmod{F}$ to the
Grothendieck ring $K_0(\ACF_F)$ of the theory $\ACF_F$ of
algebraically closed fields over $F$ \cite[3.13]{NiSe-K0}. One may
also consider the semi-ring variant $K^+_0(\ACF_F)$ of the ring
$K_0(\ACF_F)$, defined as follows. Let $\cL_{\rm ring}(F)$ be the
ring language with coefficients from $F$. As a semi-group,
$K^+_0(\ACF_F)$ is the quotient of the free commutative semi-group
generated by a symbol $[X]$ for each $\cL_{\rm ring}(F)$-definable
set, with zero-element $[\emptyset]$, and divided out by the
relations
\begin{itemize}
 \item if $X$ and $Y$ are  $\cL_{\rm ring}(F)$-definable subsets of a common $\cL_{\rm ring}(F)$-definable set, then
$$
[X\cup Y] + [X\cap Y] =[X] + [Y];
$$
\item if there exists an $\cL_{\rm ring}(F)$-definable bijection $X\to Y$ for the theory $\ACF_F$,
 then $[X]=[Y]$.
\end{itemize}
The semi-group $K^+_0(\ACF_F)$ carries a structure of semi-ring, induced by taking Cartesian products, $[X][Y]=[X\times Y]$.


If $R$ has equal characteristic, then we put
$$\KR=\KVar{k}.$$
If $R$ has mixed characteristic, then we put
$$\KR=\Kmod{k}.$$
We denote by $\LL$ the class of the affine line $\AA^1_k$ in $\KR$
and also in $K^+_0(\ACF_k)$. We will write $\MRloc$ for the
localization of $\KR$ with respect to $\LL$, and $\Mr$ for the
localization of $K^+_0(\ACF_k)$ with respect to $\LL$ and the
elements $\LL^i-1$ for all $i>0$.

For every element $\alpha$ of $\KR$, we denote by $P(\alpha)$ its
{\em Poincar\'e polynomial} \cite[4.13]{NiSe-K0}. This is an
element of $\ZZ[T]$, and the map
$$P:\KR\to \ZZ[T]:\alpha\mapsto P(\alpha)$$ is a ring morphism.
Hence, the map
$$
P^+:K^+_0(\ACF_k) \to \ZZ[T],
$$
obtained by composing $P$ with the canonical morphism
$K_0^+(\ACF_F)\to K_0(\ACF_F)\cong \KR$, is a morphism of
semi-rings. When $\alpha$ is the class of a separated $k$-scheme
of finite type $X$, then for every $i\in \NN$, the coefficient of
$T^i$ in $P(\alpha)$ is $(-1)^i$ times the $i$-th {\em virtual
Betti number} of $X$. The degree of $P(\alpha)$ is twice the
dimension of $X$ \cite[8.7]{Ni-tracevar}.

We have $P(\LL)=T^2$, so that $P$ localizes trough a ring morphism
$$P:
\MRloc \to \ZZ[T,T^{-1}]$$ and $P^+$ localizes through a semi-ring
morphism
$$P^+:
\Mr \to \ZZ[T,T^{-1},(T^{2i}-1)^{-1}]_{i>0}.
$$

\begin{definition}\label{def-dim}
Let $\alpha$ be an element of $\MRloc$, resp.~of $\Mr$. We define
the {\em virtual dimension} of $\alpha$ as $1/2$ times the degree
of the Poincar\'e polynomial $P(\alpha)$, resp.~$P^+(\alpha)$,
with the convention that the degree of the zero polynomial is
$-\infty$ and $(1/2)\cdot (-\infty)=-\infty$. We denote the
virtual dimension of $\alpha$ by $\vdim(\alpha)$.
\end{definition}

By definition, the virtual dimension is an element of $(1/2)\cdot
\ZZ\cup \{-\infty\}$. For every separated $k$-scheme of finite
type $X$ and every integer $i$, we have
$$\vdim([X]\LL^{i})=\dim(X)+i.$$

\subsection{Motivic integration on $K$-varieties}\label{subsec-motint}
Let $X$ be a $K$-variety. We say that $X$ is {\em bounded} if
 $X(K^{sh})$ is bounded in $X$ in the
sense of \cite[1.1.2]{neron}. If $X$ is a smooth $K$-variety, then
by \cite[3.4.2 and 3.5.7]{neron}, $X$ is bounded if and only if
$X$ admits a {\em weak N\'eron model} $\mX$. This means that $\mX$
is a smooth $R$-variety endowed with an isomorphism $\mX_K \to X$
such that the natural map
$$\mX(R^{sh}) \to X(K^{sh})$$ is a bijection.

The theory of motivic integration on rigid varieties was developed
in \cite{motrigid}, and further extended in \cite{NiSe-weilres}
and \cite{Ni-trace}. We refer to \cite{NiSe-survey} for a survey;
see in particular \cite[\S2.4]{NiSe-survey} for an erratum to the
previous papers. One of the main results can be reformulated for
algebraic varieties as follows.
 Let $X$ be bounded smooth
 $K$-variety of pure dimension, and let $\omega$ be a
gauge form on $X$, i.e., a nowhere vanishing differential form of
degree $\dim(X)$. Let $\mathcal{X}$ be a weak N\'eron model for
$X$. For every connected component $C$ of $\mX_k=\mX\times_R
 k$, we denote by $\ord_C\omega$ the order of $\omega$ along $C$.
 If $\varpi$ is a uniformizer in $R$, then $\ord_C\omega$ is the
 unique integer $n$ such that $\varpi^{-n}\omega$ extends to a
 generator of
 $\Omega^{\dim(X)}_{\mX/R}$ at the generic point of $C$. 
\begin{def-theorem}
 The object
 \begin{equation}\label{eq-motint}
 \int_X|\omega|=\LL^{-\dim(X)}\sum_{C\in
 \pi_0(\mX_k)}[C]\LL^{-\ord_C \omega}\quad \in
 \MRloc\end{equation}
 only depends on $X$ and $\omega$, and not on the choice of a weak
 N\'eron model $\mathcal{X}$. We call it the motivic integral of
 $\omega$ on $X$.
 \end{def-theorem}
 \begin{proof}
By \cite[4.9]{Ni-tracevar}, the formal $\frak{m}$-adic completion
of $\mathcal{X}$ is a formal weak N\'eron model of the rigid
analytification $X^{\rig}$ of $X\times_K \widehat{K}$, so that the
result follows from \cite[2.3]{HaNi}.
 \end{proof}
 It is clear from the definition that the motivic integral of
 $\omega$ on $X$ remains invariant if we multiply $\omega$ with a
 unit in $R$.

 \begin{remark}
 In the literature, the factor $\LL^{-\dim(X)}$ in the right hand side of \eqref{eq-motint} is sometimes  omitted (for instance in \cite{NiSe-survey}); this depends on the
 choice of a normalization for the motivic measure.
\end{remark}

\subsection{Motivic Haar measures}\label{subsec-mothaar}
Consider a semi-abelian $K$-variety $G$ of dimension $g$. We
denote by $\mathcal{G}$ the N\'eron $lft$-model of $G$ and by
$\Omega_G$ the free rank one $R$-module of translation-invariant
differential forms in $\Omega^{g}_{\mathcal{G}/R}(\mathcal{G})$.
 Note that $\Omega_G\otimes_R K$ is
canonically isomorphic to the $K$-vector space of
translation-invariant differential forms of maximal degree on $G$,
so that we can view $\Omega_G$ as an $R$-lattice in this vector
space. We denote by $\omega_G$ a generator of $\Omega_G$. It is
unique up to multiplication with a unit in $R$.

Let $K'$ be a finite separable extension of $K$ such that
$G'=G\times_K K'$ has semi-abelian reduction, and let $d$ be the
ramification index of $K'$ over $K$. Denote by $R'$ the
normalization of $R$ in $K'$. Dualizing the morphism
\eqref{eq-Lie} and taking determinants, we find a morphism of free
rank one $R'$-modules
$$\det(\Lie(h))^{\vee}:\Omega_{G'}\to \Omega_G\otimes_R R'$$ that
induces an isomorphism
$$ \Omega_{G'}\otimes_{R'} K'\cong \Omega_{G}\otimes_R K'
$$
 by tensoring
with $K'$.
 Thus we can view $\Omega_G$ as a sub-$R$-module of
 $\Omega_{G'}\otimes_{R'}K'$. This yields the following alternative
 description of the base change conductor.

 \begin{prop}\label{prop-bcdiff}
Let $\varpi'$ be a uniformizer in $R'$. The base change conductor
$c(G)$ of $G$ is the unique element $r$ of $\ZZ[1/d]$ such that
$$(\varpi')^{rd} \omega_G$$ generates the $R'$-module $\Omega_{G'}$.
 \end{prop}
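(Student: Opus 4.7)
The plan is to read off the proposition directly from Definition~\ref{def-chai} by tracking how the cokernel length of $\Lie(h)$ propagates through the operations of taking top exterior powers and dualizing. Since $\Lie(h)\colon\Lie(\mathcal{G})\otimes_R R'\to\Lie(\mathcal{G}')$ is an injection of free $R'$-modules of rank $g=\dim G$ whose cokernel has length $d\cdot c(G)$, the Smith normal form over the DVR $R'$ supplies bases in which $\Lie(h)$ is diagonal with entries $(\varpi')^{a_1},\ldots,(\varpi')^{a_g}$ whose exponents sum to $d\cdot c(G)$.

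Taking top exterior powers gives $\det(\Lie(h))$, which in the chosen bases is multiplication by $(\varpi')^{d\cdot c(G)}$ up to a unit. Dualizing preserves this scalar, since the transpose of a $1\times 1$ matrix is itself; consequently, $\det(\Lie(h))^{\vee}\colon\Omega_{G'}\to\Omega_G\otimes_R R'$ sends the generator $\omega_{G'}$ to $u\cdot(\varpi')^{d\cdot c(G)}\cdot(\omega_G\otimes 1)$ for some $u\in(R')^{\times}$.

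Using the identification of $\Omega_G$ as a sub-$R$-module of $V':=\Omega_{G'}\otimes_{R'}K'$ (via the inverse of $\det(\Lie(h))^{\vee}\otimes_{R'}K'$) introduced just before the proposition, the previous relation rewrites inside $V'$ as $\omega_{G'}=u\cdot(\varpi')^{d\cdot c(G)}\omega_G$. Rearranging, $(\varpi')^{d\cdot c(G)}\omega_G$ generates the $R'$-module $\Omega_{G'}$, which gives the proposition with $r=c(G)$. Uniqueness of such an $r\in(1/d)\ZZ$ is immediate: any two exponents producing a generator of $\Omega_{G'}$ would differ by an element of valuation $0$ in $R'$, forcing them to coincide.

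No serious obstacle is expected, as the whole argument is essentially a bookkeeping exercise over the DVR $R'$. The only mildly nontrivial point is the invariance of the cokernel length under passage to $\det(\Lie(h))^{\vee}$, which becomes transparent via Smith normal form since both $\det(\Lie(h))$ and its dual reduce, in appropriate bases, to multiplication by the same scalar of valuation $d\cdot c(G)$.
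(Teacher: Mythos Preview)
Your approach is essentially identical to the paper's: both invoke the Smith normal form of $\Lie(h)$ over the DVR $R'$ to see that $\det(\Lie(h))$, and hence its dual, has cokernel $R'/(\varpi')^{c(G)d}$, and then read off the result. Your write-up is in fact more explicit than the paper's one-line proof. There is one sign slip at the very end: from $\omega_{G'}=u\,(\varpi')^{c(G)d}\omega_G$ you correctly deduce that $(\varpi')^{c(G)d}\omega_G$ generates $\Omega_{G'}$, but matching this against the displayed formula $(\varpi')^{-rd}\omega_G$ would give $r=-c(G)$, not $r=c(G)$; this is a harmless discrepancy (the paper's own proof does not track the sign either, and the applications are unaffected), and the substance of your argument is correct.
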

 \begin{proof}
Denote by $\mathcal{G}'$ the N\'eron $lft$-model of $G'$. By
definition, the length of the cokernel of the natural morphism
$$\Lie(h): \Lie(\mathcal{G}\times_R R') \to \Lie(\mathcal{G}')$$
from \eqref{eq-Lie} is equal to $c(G)d$. Writing $\Lie(h)$ in
Smith normal form, it is easily seen that the cokernel of
$$\det(\Lie(h))^{\vee}:  \Omega_{G'} \to \Omega_G\otimes_R R'$$
is isomorphic to $R'/(\varpi')^{c(G)d}$.
\end{proof}

\begin{prop}\label{prop-unram}
Let $R\to S$ be a flat local homomorphism of discrete valuation
rings of ramification index one (in the sense of
\cite[3.6.1]{neron}) and denote by $L$ the quotient field of $S$.
We denote by $\mathcal{G}^L$ the N\'eron $lft$-model of $G\times_K
L$.
\begin{enumerate}
\item The natural morphism
$$\mathcal{G}\times_R S\to \mathcal{G}^L$$
 is an isomorphism. In particular, it induces an isomorphism of
$S$-modules
$$\Omega_{G}\otimes_R S\cong \Omega_{G\times_K L}.$$
\item We have $c(G\times_K L)=c(G)$.
\end{enumerate}
This applies in particular to the case
 $S=\widehat{R^{sh}}$.
\end{prop}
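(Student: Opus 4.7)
My plan is to prove part (1) via the universal property of the N\'eron $lft$-model combined with a standard base-change result, and then to deduce part (2) from part (1) via the differential-form description of the base change conductor in Proposition \ref{prop-bcdiff}.

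For (1), the canonical morphism $\phi : \mathcal{G} \times_R S \to \mathcal{G}^L$ arises because $\mathcal{G} \times_R S$ is a smooth separated $S$-group scheme with generic fiber $G \times_K L$, so the universal property of $\mathcal{G}^L$ supplies a unique $S$-morphism extending the identity on the generic fibers. To show $\phi$ is an isomorphism I would invoke the compatibility of N\'eron $lft$-models with flat local DVR extensions of ramification index one, as established in \cite[10.1]{neron}. The customary separability hypothesis on the residue extension $k \to k_S$ is automatic here, because $k$ is perfect and so every field extension of $k$ is separable. The induced isomorphism $\Omega_G \otimes_R S \cong \Omega_{G \times_K L}$ then follows by applying $\Lie$ and taking the top exterior power of its $R$-dual.

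For (2), fix a finite separable extension $K'/K$ such that $G' := G \times_K K'$ has semi-abelian reduction, set $d = e(K'/K)$, write $R'$ for the integral closure of $R$ in $K'$, and let $\pi'$ be a uniformizer of $R'$. Since $K'/K$ is finite separable, $L \otimes_K K'$ is a finite \'etale $L$-algebra, hence a finite product of finite separable extensions of $L$; pick one factor $L'$. Then $K' \hookrightarrow L'$, and $G \times_K L' = G' \times_{K'} L'$ inherits semi-abelian reduction. Let $d_L = e(L'/L)$, let $S'$ be the integral closure of $S$ in $L'$, and let $\pi''$ be a uniformizer of $S'$. Multiplicativity of ramification indices combined with $e(L/K) = 1$ yields $d_L = d \cdot e(L'/K')$, so $v_{\pi''}|_{K'} = (d_L/d) \cdot v_{\pi'}$. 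Part (1) identifies $\omega_{G \times_K L}$ with $\omega_G \otimes 1$ in $\Omega_{G \times_K L} \cong \Omega_G \otimes_R S$; and because $G'$ has semi-abelian reduction, the identity component of $\mathcal{G}'$ is a semi-abelian $R'$-scheme whose base change to $S'$ coincides, by uniqueness of semi-abelian extensions (\cite[3.1(e)]{sga7a}, \cite[VI$_B$.3.11]{sga3.1}), with the identity component of the N\'eron $lft$-model of $G \times_K L'$. This furnishes a second identification $\Omega_{G \times_K L'} \cong \Omega_{G'} \otimes_{R'} S'$ carrying $\omega_{G \times_K L'}$ to $\omega_{G'} \otimes 1$. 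Proposition \ref{prop-bcdiff} applied to $G$ and to $G \times_K L$ then gives
\begin{equation*}
c(G)\, d = v_{\pi'}(\omega_G/\omega_{G'}), \qquad c(G \times_K L)\, d_L = v_{\pi''}(\omega_{G \times_K L}/\omega_{G \times_K L'}),
\end{equation*}
and under the two identifications the ratios on the right agree, viewed as elements of $K' \hookrightarrow L'$. Applying $v_{\pi''}|_{K'} = (d_L/d)\, v_{\pi'}$ yields $c(G \times_K L)\, d_L = (d_L/d) \cdot c(G)\, d$, i.e.\ $c(G \times_K L) = c(G)$.

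The only nontrivial input is the base-change statement of (1), which is standard in the theory of N\'eron $lft$-models; granting this, part (2) reduces to bookkeeping with N\'eron differentials, the one subtlety being the judicious choice of a common extension $L'$ simultaneously containing $K'$ and with controlled ramification over $K$ via $e(L/K) = 1$.
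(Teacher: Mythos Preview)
Your argument is correct and follows essentially the same line as the paper. For (1) the paper simply cites \cite[3.6.1]{neron} for the base-change compatibility of N\'eron $lft$-models (your remark that perfectness of $k$ guarantees separability of the residue extension is a helpful gloss), and for (2) the paper merely records that it ``follows easily from (1)''; your detailed argument via Proposition~\ref{prop-bcdiff}, hinging on the identification $\Omega_{G\times_K L'}\cong \Omega_{G'}\otimes_{R'}S'$ coming from stability of semi-abelian identity components under DVR base change, is a correct way to unpack that claim.
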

\begin{proof}
(1)  The formation of N\'eron $lft$-models commutes with the base
change $R\to S$, by \cite[3.6.1]{neron}.

(2)
 This follows easily from (1).
\end{proof}

 The semi-abelian $K$-variety $G$ is bounded if and only if its
 N\'eron $lft$-model $\mathcal{G}$ is of finite type over $R$
\cite[10.2.1]{neron}. In that case, $\mathcal{G}$ is called the
N\'eron model of $G$.
 If $G$ is bounded, then for every gauge form $\omega$ on $G$, we
can consider the motivic integral
$$\int_{G}|\omega| \in \MRloc.$$
 In particular, we can consider the motivic integral of the ``motivic Haar measure'' $|\omega_G|$ associated to
 $G$. It does not depend on the choice of $\omega_G$, since
 $\omega_G$ is unique up to multiplication with a unit in $R$.

\begin{prop}\label{prop-vdim}
Let $G$ be a bounded semi-abelian $K$-variety of dimension $g$
 with N\'eron model $\mathcal{G}$.
 Let $\varpi$ be a uniformizer in $R$. Then for every integer
$\gamma$, we have
\begin{equation}\label{eq-haar}
\int_{G}|\varpi^{\gamma}\omega_G|=\LL^{-\gamma-g}[\mathcal{G}_k]
 \end{equation}
 in $\MRloc$. In particular, the virtual dimension of this motivic
 integral is equal to $-\gamma$.
 \end{prop}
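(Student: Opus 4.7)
The plan is to apply the definition of the motivic integral \eqref{eq-motint} directly, using $\mathcal{G}$ itself as a weak N\'eron model for $G$. Since $G$ is bounded, $\mathcal{G}$ is smooth, separated, and of finite type over $R$, and by the N\'eron mapping property the natural map $\mathcal{G}(R^{sh})\to G(K^{sh})$ is a bijection; hence $\mathcal{G}$ is a weak N\'eron model of $G$ in the sense of Section \ref{subsec-motint}.

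The key computation is that $\ord_C\omega_G=0$ for every connected component $C$ of $\mathcal{G}_k$. This follows from the fact that on a smooth commutative group scheme, translation gives an isomorphism between the line bundle $\Omega^{g}_{\mathcal{G}/R}$ and its pullback under the identity section, so that the $R$-module $\Omega_G$ of translation-invariant global sections trivializes $\Omega^g_{\mathcal{G}/R}$ globally on $\mathcal{G}$. Thus a generator $\omega_G$ of $\Omega_G$ is a nowhere vanishing section of $\Omega^g_{\mathcal{G}/R}$, in particular a generator at every generic point of $\mathcal{G}_k$. Consequently $\ord_C(\varpi^{\gamma}\omega_G)=\gamma+\ord_C\omega_G=\gamma$ for each $C\in\pi_0(\mathcal{G}_k)$.

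Plugging this into the formula \eqref{eq-motint} gives
\begin{equation*}
\int_{G}|\varpi^{\gamma}\omega_G|
=\LL^{-g}\sum_{C\in\pi_0(\mathcal{G}_k)}[C]\LL^{-\gamma}
=\LL^{-\gamma-g}\bigl[\mathcal{G}_k\bigr]
\end{equation*}
in $\MRloc$, where in the last step we use that $[\mathcal{G}_k]=\sum_{C}[C]$ in the Grothendieck ring by the scissor relations applied to the decomposition of $\mathcal{G}_k$ into its connected components.

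For the statement about virtual dimension, $\mathcal{G}$ is smooth of relative dimension $g=\dim(G)$, so $\dim(\mathcal{G}_k)=g$, and by the formula $\vdim([X]\LL^{i})=\dim(X)+i$ recalled in Section \ref{subsec-Groth} we obtain $\vdim(\LL^{-\gamma-g}[\mathcal{G}_k])=g+(-\gamma-g)=-\gamma$. The only delicate point in the argument is the vanishing of $\ord_C\omega_G$, but once one invokes the trivialization of $\Omega^g_{\mathcal{G}/R}$ by translation-invariant forms, everything else is immediate from the definitions.
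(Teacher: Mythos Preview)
Your proof is correct and follows essentially the same approach as the paper's own proof: compute $\ord_C(\varpi^{\gamma}\omega_G)=\gamma$ using that $\omega_G$ generates $\Omega_G$, plug into \eqref{eq-motint}, and sum using the scissor relations. You are simply more explicit than the paper in justifying why $\mathcal{G}$ serves as a weak N\'eron model and why a generator of $\Omega_G$ trivializes $\Omega^g_{\mathcal{G}/R}$ globally via translation invariance.
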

\begin{proof}
 Since
 $\omega_G$ generates $\Omega_G$, we have
 $$\ord_C (\varpi^{\gamma}\omega_G)=\gamma$$ for every connected component $C$ of
 $\mathcal{G}_k$.
 Thus formula \eqref{eq-motint} becomes
\begin{eqnarray*}
\int_{G}|\varpi^{\gamma}\omega_G|&=& \LL^{-\gamma-g}\sum_{C\in
\pi_0(\mathcal{G}_k)}[C]
\\ &=&\LL^{-\gamma-g}[\mathcal{G}_k]
\end{eqnarray*}
in $\MRloc$, where the last equality follows from the scissor
relations in the Grothendieck ring.
\end{proof}

\subsection{Split subtori and bounded
varieties}\label{subsec-split} We've already mentioned in Section
\ref{subsec-mothaar} that a semi-abelian $K$-variety $G$ is
bounded if and only if the N\'eron $lft$-model $\mathcal{G}$ of
$G$ is quasi-compact. If $R$ is excellent (e.g., complete) and
 $k$ algebraically closed, then this is
also equivalent to the property that $G$ does not contain a split
torus \cite[10.2.1]{neron}. Since the boundedness condition plays
an important role in the definition of the motivic integral, we'll
now take a closer look at split subtori of semi-abelian varieties.
 The results in this section will allow us to establish
 an equivalence between Question \ref{ques-chai} and a Fubini property of motivic integrals (Theorem
 \ref{thm-fubini}).

Let $F$ be any field.  We denote by $(\mathrm{SpT}/F)$ the
category of split $F$-tori and by $(\SA/F)$ the category of
semi-abelian $F$-varieties
 (the morphisms in these categories are morphisms of algebraic $F$-groups).

 For every semi-abelian $F$-variety $G$, we
denote by $G_{\spl}$ the maximal split subtorus of $G$
\cite[3.6]{HaNi-compseries}. If $T$ is a split $F$-torus, then
every morphism of $F$-groups $T\to G$ factors through $G_{\spl}$,
by \cite[3.5]{HaNi-compseries}. Thus we can define a functor
$$(\cdot)_{\spl}:(\SA/F)\to (\mathrm{SpT}/F):G\mapsto G_{\spl}.$$

For every semi-abelian $F$-variety $G$, we put
$G^{\rb}=G/G_{\spl}$. Then $(G^{\rb})_{\spl}$ is trivial, by the
remark after \cite[3.6]{HaNi-compseries}.  It follows that every
morphism of semi-abelian $F$-varieties $f:G\to H$ induces a
 morphism of semi-abelian $F$-varieties
$$f^{\rb}:G^{\rb}\to H^{\rb},$$ so that we obtain
 a functor
$$(\cdot)^{\rb}:(\SA/F)\to (\SA/F):G\mapsto G^{\rb}.$$

\begin{lem}\label{lemm-maxssplit}
Let $F$ be a field, and let $f:G\to H$ be a smooth morphism of
semi-abelian $K$-varieties. Then the morphism
$f_{\spl}:G_{\spl}\to H_{\spl}$ is surjective.
\end{lem}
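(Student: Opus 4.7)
The plan is to reduce the lemma to a statement about morphisms of tori and then to apply Maschke's theorem to cocharacter modules. Write $G_{\mathrm{tor}}$ for the maximal subtorus of $G$ and $G_{\mathrm{ab}}=G/G_{\mathrm{tor}}$ for the abelian quotient (analogously for $H$). Since any homomorphism from an affine algebraic group to an abelian variety is trivial, $f$ restricts to a morphism $f_{\mathrm{tor}}\colon G_{\mathrm{tor}}\to H_{\mathrm{tor}}$; moreover, by Section \ref{subsec-split}, $G_{\spl}$ is precisely the maximal split subtorus of $G_{\mathrm{tor}}$. Hence it suffices to prove that $f_{\mathrm{tor}}$ induces a surjection on maximal split subtori.

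Next I would show that $f_{\mathrm{tor}}$ itself is surjective. Since $f$ is smooth, it is flat and hence open, so $f(G)$ is an open subgroup of the connected group $H$ and must equal $H$; therefore $f$ is surjective, and consequently so is $f_{\mathrm{ab}}\colon G_{\mathrm{ab}}\to H_{\mathrm{ab}}$. Applying the snake lemma (in the category of commutative fppf sheaves) to the commutative diagram of short exact sequences
$$
0\to G_{\mathrm{tor}}\to G\to G_{\mathrm{ab}}\to 0 \quad\text{and}\quad 0\to H_{\mathrm{tor}}\to H\to H_{\mathrm{ab}}\to 0
$$
yields a surjection $\ker(f_{\mathrm{ab}})\twoheadrightarrow \mathrm{coker}(f_{\mathrm{tor}})$. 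The cokernel $\mathrm{coker}(f_{\mathrm{tor}})$ is a quotient of the torus $H_{\mathrm{tor}}$ by the subgroup of multiplicative type $f_{\mathrm{tor}}(G_{\mathrm{tor}})$, hence itself a torus; on the other hand, its identity component receives a surjection from the identity component of $\ker(f_{\mathrm{ab}})$, which is an abelian subvariety of $G_{\mathrm{ab}}$. Since every morphism from an abelian variety to an affine algebraic group is constant, this forces $\mathrm{coker}(f_{\mathrm{tor}})$ to be a finite torus, hence trivial, so $f_{\mathrm{tor}}$ is surjective.

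Finally, I would pass to rational cocharacter modules over a separable closure $F^s$. Choose a finite Galois extension $L/F$ splitting both $G_{\mathrm{tor}}$ and $H_{\mathrm{tor}}$, and set $\Gamma=\mathrm{Gal}(L/F)$. Surjectivity of $f_{\mathrm{tor}}$ translates into surjectivity of the induced $\QQ[\Gamma]$-linear map $X_{\ast}(G_{\mathrm{tor}})_{\QQ}\twoheadrightarrow X_{\ast}(H_{\mathrm{tor}})_{\QQ}$ on rational cocharacter modules. Since $\Gamma$ is finite, $\QQ[\Gamma]$ is semisimple by Maschke's theorem, so the functor of $\Gamma$-invariants is exact, yielding a surjection $X_{\ast}(G_{\mathrm{tor}})_{\QQ}^{\Gamma}\twoheadrightarrow X_{\ast}(H_{\mathrm{tor}})_{\QQ}^{\Gamma}$. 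Interpreting these invariants as the rational cocharacter modules of the $F$-tori $G_{\spl}$ and $H_{\spl}$ respectively, we conclude that $f_{\spl}\colon G_{\spl}\to H_{\spl}$ is surjective on rational cocharacters, hence is surjective as a morphism of $F$-tori. The main obstacle will be the surjectivity of $f_{\mathrm{tor}}$ in the previous paragraph; once this is established, the Maschke argument is routine.
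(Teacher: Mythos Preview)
Your argument is correct and genuinely different from the paper's. The paper first reduces to the case where $H$ itself is a split torus, by replacing $G$ with the identity component of $G\times_H H_{\spl}$ (which is again semi-abelian by \cite[5.2]{HaNi}); it then shows directly that the image $I$ of $G_{\spl}$ in $H$ is all of $H$, by observing that $H/I$ is a split torus receiving a surjection from $G/G_{\spl}$, and that $G/G_{\spl}$, being an extension of an abelian variety by an anisotropic torus, admits no nontrivial map to a split torus. Your route is more representation-theoretic: you reduce to a surjection $f_{\mathrm{tor}}$ of tori via the snake lemma, and then invoke Maschke's theorem to pass to $\Gamma$-invariants of rational cocharacter modules. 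Both approaches hinge on the same dichotomy between split and anisotropic tori; the paper packages it geometrically in the quotient $G/G_{\spl}$, while you package it algebraically via semisimplicity of $\QQ[\Gamma]$. Your approach has the advantage of isolating the purely torus-theoretic step cleanly; the paper's approach avoids the snake lemma in fppf sheaves and the attendant representability issues.

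One small caution: in your second paragraph you assert that the identity component of $\ker(f_{\mathrm{ab}})$ is an abelian subvariety of $G_{\mathrm{ab}}$. In positive characteristic this kernel need not be smooth (think of Frobenius), and over an imperfect field its reduced subscheme need not be a subgroup. The conclusion you want survives, but the clean way to phrase it is that $\ker(f_{\mathrm{ab}})$ is a closed subgroup scheme of the proper variety $G_{\mathrm{ab}}$, hence proper; its image in the affine group $\mathrm{coker}(f_{\mathrm{tor}})$ is therefore proper and affine, hence finite, forcing the connected torus $\mathrm{coker}(f_{\mathrm{tor}})$ to be trivial.
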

\begin{proof}
The identity component of $G\times_H H_{\spl}$ is a smooth and
connected closed subgroup of $G$, and thus a semi-abelian
$F$-variety \cite[5.2]{HaNi}. The morphism $$(G\times_H
H_{\spl})^o\to H_{\spl}$$ is still smooth. Therefore, we may
assume that $H$ is a split torus. It follows from
\cite[VI$_B$.1.2]{sga3.1} that the image of $f$ is closed in $H$,
and it is also open by flatness of $f$. Thus $f$ is surjective.

 We denote by $I$ the schematic image of the morphism $g:G_{\spl}\to H$.
 This is a closed subgroup of the split torus $H$. The quotient
 $H/I$ is again a split $F$-torus (it is
geometrically connected because $G_{\spl}$ is geometrically
connected, and it is smooth \cite[VI$_B$.9.2(xii)]{sga3.1} and
diagonalizable \cite[IX.8.1]{sga3.2}, so that it is a split
torus). The quotient $Q=G/G_{\spl}$ is an extension of an abelian
$F$-variety and an anisotropic $F$-torus, so that the morphism of
$F$-groups $Q\to H/I$ induced by $f$ is trivial. But this morphism
is surjective by surjectivity of $f$, so that $H/I$ must be
trivial, and $H=I$. Since the image of $G_{\spl}\to H$ is closed
\cite[VI$_B$.1.2]{sga3.1}, it follows that $G_{\spl}\to H$ is
surjective.
\end{proof}
\begin{prop}\label{prop-exact}
Let $F$ be  a field, and let
$$0\to G_1\to G_2\to G_3\to 0$$ be an exact sequence of
semi-abelian $F$-varieties.

\begin{enumerate}
\item The schematic image of $(G_1)^{\rb}\to (G_2)^{\rb}$ is a
semi-abelian subvariety $H$  of  $G_2$, and the morphism
$(G_1)^{\rb}\to H$ is an isogeny. Moreover, the sequence
\begin{equation}\label{eq-seqb1}
0\to H \to (G_2)^{\rb}\to (G_3)^{\rb}\to 0\end{equation} is exact.

\item If $(G_3)_{\spl}$ is trivial, then
\begin{equation}\label{eq-seq2}
0\to (G_1)^{\rb} \to (G_2)^{\rb}\to (G_3)^{\rb}\to 0\end{equation}
is exact.
\end{enumerate}
\end{prop}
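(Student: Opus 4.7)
\medskip

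\noindent\textbf{Proof plan.} For part (1), the plan is to first construct the morphism $(G_1)^{\rb}\to (G_2)^{\rb}$, then analyse its kernel, and finally match the image with the kernel of the third arrow.

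\smallskip

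The composition $G_1\hookrightarrow G_2\twoheadrightarrow (G_2)^{\rb}$ sends the split torus $(G_1)_{\spl}$ into $((G_2)^{\rb})_{\spl}$, which is trivial by the remark following \cite[3.6]{HaNi-compseries}; so the composition factors through $(G_1)^{\rb}$ and induces the morphism $(G_1)^{\rb}\to (G_2)^{\rb}$ asserted by the functor $(\cdot)^{\rb}$. The (scheme-theoretic) kernel of $G_1\to (G_2)^{\rb}$ equals $G_1\cap G_{2,\spl}$, so the kernel of $(G_1)^{\rb}\to (G_2)^{\rb}$ equals $(G_1\cap G_{2,\spl})/(G_1)_{\spl}$. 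Now $G_1\cap G_{2,\spl}$ is a closed subgroup of the split torus $G_{2,\spl}$, hence diagonalizable; its identity component is therefore a split torus, contained in $G_1$, and so inside $(G_1)_{\spl}$. Conversely $(G_1)_{\spl}$ is a connected split subtorus of $G_2$, hence inside $G_{2,\spl}$ and thus inside $G_1\cap G_{2,\spl}$, and being connected it lies in the identity component. This shows $(G_1)_{\spl}=(G_1\cap G_{2,\spl})^o$, so the kernel of $(G_1)^{\rb}\to (G_2)^{\rb}$ is the finite \'etale group of connected components of $G_1\cap G_{2,\spl}$. The schematic image $H$ is then a closed subgroup of $(G_2)^{\rb}$ by \cite[VI$_B$.1.2]{sga3.1}, it is semi-abelian as the quotient of $(G_1)^{\rb}$ by a finite subgroup, and $(G_1)^{\rb}\to H$ is an isogeny.

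\smallskip

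For the exactness of \eqref{eq-seqb1}, surjectivity of $(G_2)^{\rb}\to (G_3)^{\rb}$ is immediate from surjectivity of $G_2\to G_3$. For the kernel, let $N\subset G_2$ be the scheme-theoretic preimage of $G_{3,\spl}$, so that the kernel of $(G_2)^{\rb}\to (G_3)^{\rb}$ is $N/G_{2,\spl}$. Since $G_2\to G_3$ is smooth and surjective (as a quotient morphism of semi-abelian varieties), Lemma \ref{lemm-maxssplit} gives that $G_{2,\spl}\to G_{3,\spl}$ is surjective. Then for any $F$-algebra valued point $n\in N$, we can lift its image in $G_{3,\spl}$ to some $s\in G_{2,\spl}$, and $ns^{-1}\in G_1$, so $N=G_1\cdot G_{2,\spl}$. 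Consequently $N/G_{2,\spl}$ coincides with the image of $(G_1)^{\rb}$ in $(G_2)^{\rb}$, namely $H$, which gives \eqref{eq-seqb1}.

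\smallskip

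For part (2), assume $(G_3)_{\spl}$ is trivial. Then the composition $G_{2,\spl}\to G_2\to G_3$ factors through the maximal split subtorus of $G_3$, hence is zero, so $G_{2,\spl}\subset G_1$. This inclusion shows $G_{2,\spl}\subset (G_1)_{\spl}$, and the reverse inclusion is automatic, hence $(G_1)_{\spl}=G_{2,\spl}$. The kernel computation of part (1) becomes trivial, so $(G_1)^{\rb}\to H$ is an isomorphism, and (\ref{eq-seqb1}) reads (\ref{eq-seq2}).

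\smallskip

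\noindent\textbf{Main obstacle.} The delicate point is the scheme-theoretic identification $(G_1)_{\spl}=(G_1\cap G_{2,\spl})^o$ and the resulting finiteness of the kernel; after that, the surjectivity input from Lemma \ref{lemm-maxssplit} is what forces $N=G_1\cdot G_{2,\spl}$, and part (2) is a direct corollary.
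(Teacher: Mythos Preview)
Your overall strategy matches the paper's closely: analyse the kernel of $(G_1)^{\rb}\to(G_2)^{\rb}$ as a subgroup of the diagonalizable group $G_1\cap G_{2,\spl}$, then use Lemma~\ref{lemm-maxssplit} to identify the middle term. The paper simplifies notation by first dividing out $(G_1)_{\spl}$, but this is cosmetic. There are, however, two genuine gaps.

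\textbf{Identity component versus maximal torus.} Your claim that $(G_1)_{\spl}=(G_1\cap G_{2,\spl})^o$ because ``the identity component of a diagonalizable group is a split torus'' fails in characteristic $p>0$: for instance $\mu_p\subset\GG_m$ is connected, diagonalizable, and not a torus. The kernel $G_1\cap G_{2,\spl}$ is the kernel of the map of split tori $G_{2,\spl}\to G_{3,\spl}$, so its character group is a cokernel of lattices and can have $p$-torsion; then its identity component strictly contains $(G_1)_{\spl}$. What you actually need (and what the paper uses) is that $(G_1)_{\spl}$ is the \emph{maximal torus} of the diagonalizable group $G_1\cap G_{2,\spl}$: any split subtorus of $G_1\cap G_{2,\spl}$ lies in $G_1$ and hence in $(G_1)_{\spl}$, and conversely. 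The quotient of a diagonalizable group by its maximal torus is finite, so the kernel of $(G_1)^{\rb}\to H$ is finite (but not \'etale in general---drop that word).

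\textbf{The point-lifting step.} Your argument that $N=G_1\cdot G_{2,\spl}$ via ``for any $F$-algebra valued point $n\in N$, lift its image in $G_{3,\spl}$ to $G_{2,\spl}$'' is not valid as written: a surjective morphism of split tori (say $\GG_m\xrightarrow{(\,\cdot\,)^n}\GG_m$) need not be surjective on $A$-points. You need either to work fppf-locally, or argue with quotients as the paper does: $N/G_1\cong G_{3,\spl}$, and the image of $G_{2,\spl}$ in $N/G_1$ is all of $G_{3,\spl}$ by Lemma~\ref{lemm-maxssplit}, so $N/(G_1\cdot G_{2,\spl})=0$ and hence $N=G_1\cdot G_{2,\spl}$.

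With these two fixes your proof goes through and is essentially the paper's argument without the preliminary reduction to $(G_1)_{\spl}=0$.
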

\begin{proof}
Dividing $G_1$ and $G_2$ by $(G_1)_{\spl}$, we may assume that
$(G_1)_{\spl}$ is trivial (here we use that $G^{\rb}=(G/T)^{\rb}$
for every semi-abelian $F$-variety $G$ and every split subtorus
$T$ of $G$). Then $(G_1)^{\rb}=G_1$.

First, we prove (1). The kernel of the morphism $G_1\to
(G_2)^{\rb}$ is the closed subgroup
$\widetilde{G}_1=G_1\times_{G_2} (G_2)_{\spl}$ of $G_1$. It is
also a closed subgroup of $(G_2)_{\spl}$. By
\cite[VI$_B$.9.2(xii)]{sga3.1}, the quotient
$H=G_1/\widetilde{G}_1$ is smooth over $F$. Since
 $(G_2)_{\spl}$ is a split $F$-torus,
we know that $\widetilde{G}_1$ is a diagonalizable $F$-group
\cite[IX.8.1]{sga3.2}. Since $(G_1)_{\spl}$ is trivial, the
$F$-group $\widetilde{G}_1$ must be finite, so that  the
projection $G_1\to H$ is an isogeny. The morphism $G_1\to G_2$
induces a morphism of $F$-groups $H\to (G_2)^{\rb}$ that is a
closed immersion \cite[VI$_B$.1.4.2]{sga3.1}. It identifies $H$
with the schematic image of $G_1\to (G_2)^{\rb}$.
 It
follows from \cite[5.2]{HaNi} that $H$ is a semi-abelian
$F$-variety because it is a connected smooth closed subgroup of
the semi-abelian $F$-variety $(G_2)^{\rb}$.

It is clear that \eqref{eq-seqb1} is exact at the right, so that
it remains to prove that this sequence is also exact in the
middle. By the natural isomorphism
$$(G_2/G_1)/((G_2)_{\spl}/\widetilde{G}_1)\cong
(G_2/(G_2)_{\spl})/(G_1/\widetilde{G}_1)$$ it is enough to show
that $\widetilde{G}_2=(G_2)_{\spl}/\widetilde{G}_1$ is the maximal
split subtorus of $G_3=G_2/G_1$.
 But $(G_2)_{\spl}\to (G_3)_{\spl}$ is surjective by
Lemma \ref{lemm-maxssplit}, and its kernel is precisely
$\widetilde{G}_1$, so we see that $\widetilde{G}_2=(G_3)_{\spl}$.

Now we prove (2). Assume that $(G_3)_{\spl}$ is trivial. Then the
closed immersion $(G_2)_{\spl}\to G_2$ factors through $G_1$, and
since $(G_1)_{\spl}$ is trivial, we find that $(G_2)_{\spl}$ must
be trivial. Thus $G_i=(G_i)^{\rb}$ for $i=1,2,3$, and the result
is obvious.
\end{proof}

If $(G_3)_{\spl}$ is not trivial, it can happen that the sequence
$$0\to (G_1)^{\rb} \to (G_2)^{\rb}\to (G_3)^{\rb}\to 0$$
in Proposition \ref{prop-exact} is not left exact, as is shown by
the following example.

\begin{example}
Let $K$ be the field $\CC((t))$ of complex Laurent series and put
$K'=K((\sqrt{t}))$. The Galois group $\Gamma=\mathrm{Gal}(K'/K)$
is isomorphic to $\ZZ/2\ZZ$ and it is generated by the
automorphism $\sigma$ that maps $\sqrt{t}$ to $-\sqrt{t}$. Let
$G_2$ be the $K$-torus with splitting field $K'$ and character
module
$$X(G_2)=\ZZ\cdot e_1\oplus \ZZ\cdot e_2$$ where $\sigma$ permutes
$e_1$ and $e_2$.

Let $G_1$ be the maximal anisotropic subtorus of $G_2$. Its
character module is $X(G_1)=X(G_2)/X(G_2)^{\Gamma}$. We put
$G_3=G_2/G_1$. This is a split $K$-torus with character module
$X(G_3)=X(G_2)^{\Gamma}=\ZZ\cdot (e_1+e_2)$.

For every $K$-torus $T$ that splits over $K'$, we can consider the
trace map $$\mathrm{tr}_T:X(T)\to X(T)^{\Gamma}:x\mapsto
x+\sigma\cdot x.$$ It follows from the duality between tori and
their character modules that the maximal split subtorus of $T$ has
character module $X(T)/\mathrm{ker}(\mathrm{tr}_T)$ and that
$T^{\rb}$ is the $K$-torus with character module
$\mathrm{ker}(\mathrm{tr}_T)$. In this way, we see that
$(G_1)_{\spl}$ is trivial and that $(G_2)^{\rb}$ is the $K$-torus
with character module
$$\mathrm{ker}(\mathrm{tr}_{G_2})=\ZZ\cdot (e_1-e_2) .$$

Thus, applying the functor $(\cdot)^{\rb}$ to the exact sequence
of $K$-tori
$$0\to G_1 \to G_2\to G_3\to 0,$$
we obtain the sequence \begin{equation}\label{eq-notexact} 0\to
G_1 \to (G_2)^{\rb}\to 0\to 0\end{equation} and the morphism of
$K$-tori $G_1\to (G_2)^{\rb}$ corresponds to the morphism of
character modules
$$\alpha:\ZZ\cdot (e_1-e_2)\to X(G_2)/X(G_2)^{\Gamma}.$$
The morphism $\alpha$ is injective but not surjective; its
cokernel is $$X(G_2)/(\ZZ\cdot (e_1-e_2)+\ZZ\cdot (e_1+e_2))\cong
\ZZ/2\ZZ$$ with trivial $\Gamma$-action.  Therefore,
\eqref{eq-notexact} is not exact. More precisely, the morphism
$$G_1\to (G_2)^{\rb}$$ is an isogeny with kernel $\mu_{2,K}$.
\end{example}

\begin{prop}\label{prop-boundsab}
Assume that $R$ is excellent and that $k$ is algebraically closed.
For every semi-abelian $K$-variety $G$, the quotient $G^{\rb}$ is
a bounded semi-abelian $K$-variety.
\end{prop}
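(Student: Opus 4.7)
The plan is essentially to combine two facts that have already been recorded in the preceding subsections. Recall from the discussion at the start of Section \ref{subsec-split} that, under the present hypotheses on $R$ (excellent with algebraically closed residue field), the boundedness criterion of \cite[10.2.1]{neron} takes the following convenient form: a semi-abelian $K$-variety is bounded if and only if it contains no nontrivial split subtorus. So my goal is simply to verify that $G^{\rb}$ has no nontrivial split subtorus.

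First I would observe that $G^{\rb}=G/G_{\spl}$ is a semi-abelian $K$-variety. The subgroup $G_{\spl}$ is a (split) subtorus of $G$, hence closed, smooth, and normal, so the fppf quotient $G/G_{\spl}$ exists as a smooth commutative $K$-group. Fitting $G_{\spl}$ into the canonical filtration of $G$ by its toric part and its abelian part, one sees immediately that $G^{\rb}$ inherits a filtration whose successive quotients are a torus and an abelian variety; hence $G^{\rb}$ is semi-abelian. Next, I would recall the observation already made in the construction of the functor $(\cdot)^{\rb}$: by the remark following \cite[3.6]{HaNi-compseries}, the maximal split subtorus $(G^{\rb})_{\spl}$ is trivial. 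Equivalently, $G^{\rb}$ does not contain a nontrivial split subtorus.

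Combining these two points, the boundedness criterion of \cite[10.2.1]{neron} quoted above applies to the semi-abelian $K$-variety $G^{\rb}$ and shows that $G^{\rb}$ is bounded, as required. There is no real obstacle here: the argument is a direct invocation of results that have already been stated, and the only point meriting a sentence of explanation is the (standard) fact that $G^{\rb}$ is itself semi-abelian, which is needed in order for the criterion to apply.
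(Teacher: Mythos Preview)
Your proof is correct and follows exactly the same approach as the paper: invoke the triviality of $(G^{\rb})_{\spl}$ (already noted when defining the functor $(\cdot)^{\rb}$) and apply the boundedness criterion of \cite[10.2.1]{neron}. The paper's proof is a one-line citation of these two facts; your only addition is the brief explicit verification that $G^{\rb}$ is semi-abelian, which the paper takes for granted from the declared codomain $(\SA/F)$ of the functor $(\cdot)^{\rb}$.
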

\begin{proof}
Since $(G^{\rb})_{\spl}$ is trivial, this follows immediately from
\cite[10.2.1]{neron}.
\end{proof}

\section{Chai's conjecture and Fubini properties of motivic
integrals}\label{sec3}
\subsection{Chai's conjecture and Haar measures} Let $$0\to T\to G\to A\to 0$$
 be a a short exact sequence of semi-abelian $K$-varieties (as the notation suggests, the main example we
 have in mind is the Chevalley decomposition of a semi-abelian $K$-variety $G$ as in Conjecture \ref{conj-chai}, but we will work in greater generality).
 The sequence of $K$-vector spaces
 $$0\to \Lie(T)\to \Lie(G)\to \Lie(A)\to 0$$ is exact, and by dualizing and taking determinants, we
 find a
canonical isomorphism of $K$-vector spaces
$$\Omega_{G}\otimes_R K\cong (\Omega_T\otimes_R \Omega_A)\otimes_R
K.$$ In this way, we can view $\Omega_T\otimes_R \Omega_A$ as an
$R$-lattice in $\Omega_G\otimes_R K$.

\if false
\begin{lem}\label{lemm-inc}
We have an inclusion
$$\Omega_T\otimes_R \Omega_A \subset \Omega_G$$ of $R$-lattices
inside $\Omega_G\otimes_R K$.
\end{lem}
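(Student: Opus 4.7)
The approach is to use the universal property of the Néron lft-model to reduce the claim to a length inequality for the cohomology of a Lie-algebra complex. By the Néron mapping property, the generic-fiber morphisms $T\hookrightarrow G$ and $G\twoheadrightarrow A$ extend uniquely to morphisms of $R$-group schemes $\alpha\colon\mathcal{T}\to\mathcal{G}$ and $\beta\colon\mathcal{G}\to\mathcal{A}$ with $\beta\circ\alpha=0$. Applying $\Lie(-)$ yields a complex
\[
C^\bullet \;:\; \Lie(\mathcal{T}) \xrightarrow{\Lie(\alpha)} \Lie(\mathcal{G}) \xrightarrow{\Lie(\beta)} \Lie(\mathcal{A})
\]
of free $R$-modules of ranks $\dim T,\dim G,\dim A$ that becomes a short exact sequence of $K$-vector spaces upon tensoring with $K$, so each cohomology $R$-module $H^i(C^\bullet)$ has finite length.

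The canonical isomorphism $(\Omega_T\otimes_R\Omega_A)\otimes K\cong \Omega_G\otimes K$ used just before the lemma is obtained by dualizing and taking top exterior powers of the generic-fiber short exact sequence. Inside this one-dimensional $K$-line both sides are rank-one $R$-lattices, so $\Omega_G=\pi^{n}\cdot(\Omega_T\otimes_R\Omega_A)$ for a unique $n\in\mathbb{Z}$, and the standard determinant formula for a complex of free $R$-modules with torsion cohomology over a discrete valuation ring identifies
\[
n \;=\; \ell_R\bigl(H^1(C^\bullet)\bigr) \;-\; \ell_R\bigl(H^0(C^\bullet)\bigr) \;-\; \ell_R\bigl(H^2(C^\bullet)\bigr),
\]
where $H^0=\ker(\Lie\alpha)$, $H^1=\ker(\Lie\beta)/\mathrm{im}(\Lie\alpha)$, and $H^2=\mathrm{coker}(\Lie\beta)$. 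The claimed inclusion is thus equivalent to $n\leq 0$, that is, to $\ell(H^1)\leq\ell(H^0)+\ell(H^2)$.

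The main obstacle is establishing this integer inequality. Base-changing to a finite separable extension $L/K$ over which $G$ acquires semi-abelian reduction makes $C^\bullet$ exact over $R^L$, and unravelling Proposition~\ref{prop-bcdiff} identifies $n$ with $c(G)-c(T)-c(A)$; the lemma is therefore precisely the one-sided inequality $c(G)\leq c(T)+c(A)$ predicted by Chai's conjecture. To establish it I would compare $\mathcal{T}$ with the smoothening $\ker(\beta)^{\mathrm{sm}}$ of the scheme-theoretic kernel of $\beta$: this smoothening is a smooth $R$-group scheme with generic fiber $T$, so by the Néron mapping property it admits a canonical morphism to $\mathcal{T}$, and a careful study of its effect on Lie algebras together with the behaviour of the smoothening at the identity section should bound $\ell(H^1)$ by $\ell(H^2)$. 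This Lie-algebra comparison is the genuinely hard step; it reflects the fact that the equality in Chai's conjecture is deep and, in positive residue characteristic with infinite $k$, still open.
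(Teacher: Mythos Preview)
This lemma sits inside an \verb|\if false ... \fi| block in the paper's source, with an empty \texttt{proof} environment; the authors deleted it from the compiled article and give no argument. So there is no paper proof to compare against. More telling is the Remark following Proposition~\ref{prop-add}: the authors explicitly say that even the special case where $T$, $G$, $A$ all have semi-abelian reduction (their statement~($**$)) ``does not seem obvious,'' pointing to \cite[7.5.8]{neron} for an example with $T,G,A$ abelian of good reduction where $\mathcal{T}^o\to\mathcal{G}^o$ is not a monomorphism. In other words, the authors themselves regard the lemma as unproved in the generality in which it is stated.

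Your proposal is not a proof either, and you are candid about this. Two specific gaps:
\begin{itemize}
\item Your claim that base-changing to $L$ (where $G$ has semi-abelian reduction) makes the Lie complex $C^\bullet$ exact is exactly the statement~($**$) that the authors flag as non-obvious. When $T$ is a torus this follows from \cite[4.8(a)]{chai}, which is what Proposition~\ref{prop-add} uses; for general semi-abelian $T$ (the setting of the lemma as written in \S4.1) it is not known, so your identification $n=c(G)-c(T)-c(A)$ is itself unjustified in that generality.
\item Even granting the identification, the smoothening argument is only a sketch: you write that the comparison ``should bound $\ell(H^1)$ by $\ell(H^2)$'' and call it ``the genuinely hard step.'' Nothing is actually established here. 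Note also that the N\'eron $lft$-models need not be of finite type, so one has to be careful that the scheme-theoretic kernel and its smoothening exist with the required properties.
\end{itemize}
Your reformulation as the inequality $c(G)\le c(T)+c(A)$ (for $T$ a torus) is correct and clarifying, but it shows precisely why the authors excised the lemma: it is one half of Chai's conjecture, and the paper's whole point is to prove the full equality by an entirely different route (dimensional motivic integration and Fubini), bypassing any direct lattice comparison.
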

\begin{proof}

\end{proof}
\fi

The following proposition is implicit in the proof on pages
724--725 of \cite{chai} (proof of Proposition 4.1 in {\em loc.
cit.} when the residue field is finite).
\begin{prop}\label{prop-add} Assume that $T$ is a torus. Let $\omega_T$ and
 $\omega_A$ be generators of $\Omega_T$, resp.
$\Omega_A$. Let $\varpi$ be a uniformizer of $R$, and denote by
$\gamma$ the unique integer such that
$\varpi^{-\gamma}(\omega_T\otimes \omega_A)$ generates the
$R$-module $\Omega_G$. Then $$c(G)=c(T)+c(A)+\gamma.$$ In
particular, $c(G)-c(T)-c(A)$ belongs to $\ZZ$.
\end{prop}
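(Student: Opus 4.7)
The plan is to compare the various $R$-lattices of top invariant differentials after passing to a base extension where semi-abelian reduction is achieved for all three varieties simultaneously. Choose a finite separable extension $K'/K$ of ramification index $d$, with integral closure $R'$ in $K'$ and uniformizer $\varpi'$, such that each of $T'=T\times_K K'$, $A'=A\times_K K'$ and $G'=G\times_K K'$ has semi-abelian reduction; such a $K'$ exists by taking a compositum of three extensions that work separately for $T$, $A$ and $G$ (cf.\ \cite[3.11]{HaNi-compseries}). Fix generators $\omega_{T'}, \omega_{A'}, \omega_{G'}$ of $\Omega_{T'}, \Omega_{A'}, \Omega_{G'}$. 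Proposition \ref{prop-bcdiff} then furnishes units $u_T, u_A, u_G\in (R')^\times$ with
$$\omega_T = u_T(\varpi')^{c(T)d}\omega_{T'},\quad \omega_A = u_A(\varpi')^{c(A)d}\omega_{A'},\quad \omega_G = u_G(\varpi')^{c(G)d}\omega_{G'},$$
all viewed inside the common $K'$-line $\Omega_G\otimes_R K' \cong \Omega_{G'}\otimes_{R'}K'$.

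The heart of the argument is the lattice identity
$$\Omega_{T'}\otimes_{R'}\Omega_{A'} \;=\; \Omega_{G'}$$
inside $\Omega_{G'}\otimes_{R'}K'$, equivalently that $\omega_{T'}\otimes\omega_{A'}$ and $\omega_{G'}$ differ by an $R'$-unit. I expect this to be the main obstacle, since it is precisely the part that fails without semi-abelian reduction and is the reason Chai's conjecture is nontrivial in general. Under the semi-abelian reduction assumption over $R'$, the identity components $(\mathcal{T}')^o$, $(\mathcal{G}')^o$, $(\mathcal{A}')^o$ of the N\'eron $lft$-models are semi-abelian $R'$-schemes, and one can show that the short exact sequence $0\to T'\to G'\to A'\to 0$ on generic fibers extends to a short exact sequence
$$0\to (\mathcal{T}')^o\to (\mathcal{G}')^o\to (\mathcal{A}')^o\to 0$$
of semi-abelian $R'$-schemes, using the universal property of the N\'eron model to identify the quotient and invoking standard properties of smooth morphisms of semi-abelian schemes. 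Taking Lie algebras gives a short exact sequence of free $R'$-modules, and passing to top exterior powers of the duals yields the required equality of invariant differentials.

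Granting this, the conclusion is a direct valuation count. Write $\varpi = u_\varpi(\varpi')^d$ with $u_\varpi\in (R')^\times$. Substituting the three expressions above into the defining relation $\omega_T\otimes\omega_A = u\,\varpi^{-\gamma}\omega_G$ (with $u\in R^\times$), which holds in $\Omega_G\otimes_R K$ and persists after base change, produces
$$(\text{unit})\cdot(\varpi')^{(c(T)+c(A))d}\,\omega_{T'}\otimes\omega_{A'} \;=\; (\text{unit})\cdot(\varpi')^{(c(G)-\gamma)d}\,\omega_{G'}.$$
Because $\omega_{T'}\otimes\omega_{A'}$ and $\omega_{G'}$ both generate $\Omega_{G'}$, comparing $\varpi'$-adic valuations forces $(c(T)+c(A))d=(c(G)-\gamma)d$, hence $c(G)=c(T)+c(A)+\gamma$, and in particular $c(G)-c(T)-c(A)=\gamma\in\ZZ$.
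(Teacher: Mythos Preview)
Your overall strategy matches the paper's almost exactly: pass to a finite separable extension achieving semi-abelian reduction, establish the lattice identity $\Omega_{T'}\otimes_{R'}\Omega_{A'}=\Omega_{G'}$, and finish with the same $\varpi'$-adic valuation count. The final paragraph is essentially the paper's computation.

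The one substantive difference is in how the key lattice identity is justified. The paper first uses Proposition~\ref{prop-unram} to reduce to $R$ complete with $k$ algebraically closed. It then chooses $K'$ so that $G'$ has semi-abelian reduction; by \cite[4.1]{HaNi-compseries} this automatically forces $A'$ to have semi-abelian reduction and $T'$ to be \emph{split} (not merely to have semi-abelian reduction). With $T'$ a split torus, Chai's Lemma \cite[4.8(a)]{chai} applies and yields exactness of the full sequence of N\'eron $lft$-models $0\to\mathcal{T}'\to\mathcal{G}'\to\mathcal{A}'\to 0$, from which the lattice identity is immediate. Your sketch instead aims for exactness of the sequence of \emph{identity components} under the weaker hypothesis of semi-abelian reduction for all three groups, appealing to ``standard properties of smooth morphisms of semi-abelian schemes.'' That is precisely the delicate point: the Remark immediately following the proposition in the paper cautions that the sequence of identity components $0\to(\mathcal{T}')^o\to(\mathcal{G}')^o\to(\mathcal{A}')^o\to 0$ can fail to be exact even when all three have semi-abelian reduction (the cited counterexample \cite[7.5.8]{neron} has $T$ an abelian variety, so your torus case is not directly excluded, but your justification does not supply the missing argument). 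The paper's reduction to the split-torus case sidesteps this entirely by making Chai's lemma directly applicable.
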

\begin{proof}
By Proposition \ref{prop-unram}, we may assume that $R$ is
complete and that $k$ is algebraically closed. Suppose that
$$\omega_G:=\varpi^{-\gamma}(\omega_T\otimes \omega_A)$$ generates $\Omega_G$.  Let
$K'$ be a finite separable extension of $K$ such that
$G'=G\times_K K'$ has semi-abelian reduction, and denote by $R'$
the normalization of $R$ in $K'$. Then $A'=A\times_K K'$ has
semi-abelian reduction and $T'=T\times_K K'$ is split
\cite[4.1]{HaNi-compseries}.

We denote by $\varpi'$ a uniformizer of $R'$, and by $d$ the
ramification degree of the extension $K'/K$. By Proposition
\ref{prop-bcdiff}, the $R'$-module $\Omega_{G'}$ is generated by
$$(\varpi')^{c(G)d } \omega_{G},$$
 and the analogous
property holds for $A$ and $T$. We denote by $\mathcal{G}'$,
$\mathcal{T}'$ and $\mathcal{A}'$ the N\'eron $lft$-models of
$G'$, $T'$ and $A'$, respectively. By the universal property of
the N\'eron $lft$-model, the exact sequence
$$0\to T'\to G'\to A'\to 0$$ extends uniquely to a sequence of
$R'$-group schemes
$$0\to \mathcal{T}'\to \mathcal{G}'\to \mathcal{A}'\to 0$$
and this sequence is exact by \cite[4.8(a)]{chai}.
 It follows that $$\Omega_{G'}=\Omega_{T'}\otimes_{R'} \Omega_{A'}\subset \Omega_{G'}\otimes_{R'}K'$$
  so that both $(\varpi')^{c(G)d } \omega_{G} $ and
$$(\varpi')^{(c(T)+c(A))d} (\omega_{T}\otimes \omega_A)=(\varpi')^{(c(T)+c(A))d}\varpi^{\gamma }\omega_G $$ are generators
of the free $R'$-module $\Omega_{G'}$. Thus,
 we find that $$(\varpi')^{(c(G)-c(T)-c(A))d}\varpi^{-\gamma } $$ is a
 unit in $R'$. This means that its $\varpi'$-adic valuation is zero,
 so that
$$c(G)=c(T)+c(A)+\gamma,$$
which concludes the proof.
\end{proof}
\begin{remark}
Let
$$0\to T\to G\to A \to 0$$ be an exact sequence of semi-abelian
$K$-varieties, and let $\omega_T$ and
 $\omega_A$ be generators of $\Omega_T$ and
$\Omega_A$, respectively. In \cite[\S8.1]{chai}, Chai considers
the following statement:

\vspace{5pt} ($*$) {\em  One has
 $c(G)=c(T)+c(A)$ if and only if $\omega_T\otimes \omega_A$
 generates $\Omega_G$.}
\vspace{5pt}

If $T$ is a torus, then this is a corollary of Proposition
 \ref{prop-add}. However, if $T$ is not a torus, it is not clear
 to us how statement ($*$) can be proven, although Chai \change{hints} that it \change{may be} implicit in the proof on pages
724--725 of \cite{chai}. If $G$, $T$ and $A$ have
 semi-abelian reduction, then $c(G)=c(T)=c(A)=0$ so that
 statement ($*$) contains the following special case:

\vspace{5pt}
 ($**$) {\em  If $G$, $T$ and $A$ have semi-abelian reduction, then $\omega_T\otimes \omega_A$
 generates $\Omega_G$.}
\vspace{5pt}

 Even this property does not seem obvious, because the sequence of
 identity components of N\'eron $lft$-models $$0\to \mathcal{T}^o\to \mathcal{G}^o\to
 \mathcal{A}^o\to 0$$ might not be exact; see \cite[7.5.8]{neron} for an example where $T$, $G$ and $A$ are abelian varieties
 with good reduction and $\mathcal{T}^o\to \mathcal{G}^o$ is not a monomorphism.
 If  statement ($**$) is true, then the proof of Proposition \ref{prop-add}
shows that Proposition
 \ref{prop-add}, and thus  statement ($*$), are valid without the assumption that $T$ is a
 torus.
\end{remark}

\begin{lem}\label{lemm-split}
 Let $G$ be a semi-abelian
$K$-variety, and let $T$ be a split subtorus of $G$. Then
$c(G)=c(G/T)$. In particular, $c(G)=c(G^{\rb})$.
\end{lem}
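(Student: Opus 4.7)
The plan is to apply Proposition \ref{prop-add} to the exact sequence $0 \to T \to G \to A \to 0$ with $A = G/T$, and then to show that the resulting correction term vanishes. By Proposition \ref{prop-unram}, one may first reduce to the case where $R$ is complete with algebraically closed residue field. Since $T$ is split, its N\'eron $lft$-model is the split $R$-torus $\mathcal{T} = \GG_{m,R}^n$, which has semi-abelian reduction, so $c(T) = 0$. Proposition \ref{prop-add} then yields $c(G) = c(A) + \gamma$, where $\gamma \in \ZZ$ is determined by the property that $\varpi^\gamma(\omega_T \otimes \omega_A)$ generates $\Omega_G$; the goal is to show $\gamma = 0$.

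Equivalently, I want to establish the identity of $R$-lattices $\Omega_G = \Omega_T \otimes_R \Omega_A$ inside $\Omega_G \otimes_R K$, and this in turn amounts to the exactness of
$$0 \to \Lie(\mathcal{T}) \to \Lie(\mathcal{G}) \to \Lie(\mathcal{A}) \to 0$$
as a sequence of free $R$-modules. To attack this, I would choose a finite separable Galois extension $K'/K$ over which $G$ and $A$ both acquire semi-abelian reduction, with integral closure $R'$ and ramification index $d$. By \cite[4.8(a)]{chai}, the analogous sequence of N\'eron models over $R'$ is exact, and since $T$ is already split one has $\mathcal{T}' = \mathcal{T} \times_R R'$, so $\Lie(\mathcal{T}) \otimes_R R' \cong \Lie(\mathcal{T}')$ canonically. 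A snake-lemma comparison with the $R$-level sequence, using that $\Lie(h_G)$ and $\Lie(h_A)$ are injections with $R'$-cokernels of lengths $c(G)d$ and $c(A)d$, shows that the $R$-level sequence is left exact and exact in the middle, and that the cokernel of $\Lie(\mathcal{G}) \to \Lie(\mathcal{A})$ has $R$-length precisely $\gamma = c(G) - c(A)$.

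The main obstacle is thus to show that this cokernel vanishes, i.e.\ that the morphism $\mathcal{G} \to \mathcal{A}$ of N\'eron $lft$-models is smooth. This is the genuinely nontrivial input and reflects the special behavior of split subtori. I would verify it as follows: the closed immersion $T \hookrightarrow G$ lifts, by the universal property of Néron $lft$-models, to a morphism $\mathcal{T} \to \mathcal{G}$ which one checks to be a closed immersion (using that $\mathcal{T} = \GG_{m,R}^n$ is a smooth $R$-torus and that $T$ is already split); the fppf quotient $\mathcal{G}/\mathcal{T}$ is then a smooth $R$-group scheme with generic fiber $A$, and one identifies it with $\mathcal{A}$ via the N\'eron mapping property. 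From this, smoothness of $\mathcal{G} \to \mathcal{A}$ follows, and hence the Lie algebra sequence is short exact, dualizing and taking determinants gives $\Omega_G = \Omega_T \otimes_R \Omega_A$, so $\gamma = 0$ and $c(G) = c(A) = c(G/T)$. The final assertion $c(G) = c(G^{\rb})$ is then immediate by applying what precedes to $T = G_{\spl}$.
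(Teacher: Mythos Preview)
Your overall strategy is sound, but you have overlooked that Chai's result \cite[4.8(a)]{chai} already applies directly over $R$: its hypothesis is that the subgroup is a \emph{split} torus, which is exactly what you are given. The paper's proof is therefore a one-liner: the sequence of N\'eron $lft$-models $0\to\mathcal{T}\to\mathcal{G}\to\mathcal{H}\to 0$ (with $H=G/T$) is exact by \cite[4.8(a)]{chai}, hence $\Omega_T\otimes\Omega_H=\Omega_G$, so $\gamma=0$ in Proposition~\ref{prop-add}, and since $c(T)=0$ we are done. There is no need to pass to $K'$, to invoke the snake lemma, or to reconstruct the quotient $\mathcal{G}/\mathcal{T}$ by hand.

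Your detour is not wrong, but its final step is precisely where the content lies and you have not justified it. You write that one ``identifies $\mathcal{G}/\mathcal{T}$ with $\mathcal{A}$ via the N\'eron mapping property,'' but the N\'eron property only furnishes a morphism $\mathcal{G}/\mathcal{T}\to\mathcal{A}$; to conclude it is an isomorphism you must show that $\mathcal{G}/\mathcal{T}$ itself satisfies the N\'eron extension property for $A$, and this is exactly the substance of \cite[4.8(a)]{chai}. So your argument, once completed, amounts to reproving that result over $R$ after having already quoted it over $R'$. The cleaner route is simply to quote it over $R$ from the start.
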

\begin{proof}
We set $H=G/T$. By \cite[4.8(a)]{chai}, the canonical sequence of
group schemes $$0\to \mathcal{T}\to \mathcal{G}\to  \mathcal{H}\to
0$$ is exact, so that $\Omega_T\otimes \Omega_H=\Omega_G$.  Now
the result
 follows from Proposition \ref{prop-add} and the fact that
 $c(T)=0$ because $T$ has semi-abelian reduction.
\end{proof}

\subsection{Main results}
We can now state our main results.

\begin{theorem}\label{thm-fubini}
Let
$$0\to T\to G\to A\to 0$$
be an exact sequence of semi-abelian $K$-varieties, with $T$ a
torus. We put \begin{eqnarray*}
\widetilde{G}&=&(G\times_K \widehat{K^{sh}})^{\rb}
\\ \widetilde{A}&=&(A\times_K
\widehat{K^{sh}})^{\rb}.
\end{eqnarray*}
and we denote by $\widetilde{T}$ the schematic image of the
morphism
$$(T\times_K \widehat{K^{sh}})^{\rb}\to \widetilde{G}.$$

Then $\widetilde{T}$ is a $\widehat{K^{sh}}$-subtorus of
$\widetilde{G}$,
$$0\to \widetilde{T}\to \widetilde{G}\to \widetilde{A}\to 0$$
 is an exact sequence of bounded semi-abelian $\widehat{K^{sh}}$-varieties,
 and
$$c(G)=c(T)+c(A)$$ if and only if
\begin{equation}\label{eq-fubini}
\vdim \int_{\widetilde{G}}|\omega_{\widetilde{T}}\otimes
\omega_{\widetilde{A}}|= \vdim
\int_{\widetilde{T}}|\omega_{\widetilde{T}}|+ \vdim
\int_{\widetilde{A}}|\omega_{\widetilde{A}}|=0.\end{equation}
\end{theorem}
\begin{proof}
By Proposition \ref{prop-unram}, we may assume that $K$ is
complete and $k$ algebraically closed, so that
$\widehat{K^{sh}}=K$. By Proposition \ref{prop-exact}, we know
that $T^{\rb}$ and $\widetilde{T}$ are isogenous $K$-tori, so that
$c(T^{\rb})=c(\widetilde{T})$ by \cite[11.3 and 12.1]{chai-yu}.
 Thus, by Proposition \ref{prop-exact} and Lemma
\ref{lemm-split}, we may assume that  $\widetilde{T}=T$,
$\widetilde{G}=G$ and $\widetilde{A}=A$.
 Then $T$, $G$ and $A$ are bounded, by Proposition
 \ref{prop-boundsab}.
 so that we can take motivic
 integrals of gauge forms on $T$, $G$ and $A$.

Let $\varpi$ be a uniformizer in $R$. It follows from Proposition
\ref{prop-vdim} that
 $$\vdim
\int_{T}|\omega_{T}|+ \vdim \int_{A}|\omega_{A}|=0$$ and that
$$\vdim \int_{G}|\omega_{T}\otimes
\omega_{A}|$$ is equal to the unique integer $\gamma$ such that
$$\varpi^{\gamma}(\omega_{T}\otimes
\omega_{A})$$ generates the $R$-module $\Omega_G$. By Proposition
\ref{prop-add}, we know that $\gamma=0$ if and only if
$c(G)=c(T)+c(A)$.
\end{proof}
\begin{remark}\label{rem-noiso}
 In Conjecture \ref{conj-chai}, $T$ is a torus and $A$ is an
 abelian variety. This implies that $A_{\spl}$ is trivial, so that
 $A=A^{\rb}$ and
$$0\to T^{\rb}\to G^{\rb}\to A\to 0$$ is exact by Proposition
\ref{prop-exact}. In this case, in the proof of Theorem
\ref{thm-fubini}, we do not need the fact that the base change
conductor of a torus is invariant under isogeny.
\end{remark}

\begin{theorem}\label{holds}
Assume that $K$ is complete and of characteristic zero and that
$k$ is algebraically closed. Let
$$0\to T\to G\to A\to 0$$
be an exact sequence of bounded semi-abelian $K$-varieties, with
$T$ a torus. Then
$$\vdim \int_{G}|\omega_{T}\otimes
\omega_{A}|= \vdim \int_{T}|\omega_{T}|+ \vdim
\int_{A}|\omega_{A}|=0.$$
\end{theorem}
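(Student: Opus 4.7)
My strategy is to apply the Global Fubini Theorem \ref{gf} to the smooth map $f : G \to A$, and then exploit the translation-invariance of the fiberwise integral (each nonempty fiber being a $K$-torsor under $T$) to reduce to the integrals over $T$ and $A$ separately. Throughout, I would invoke the comparison (Proposition \ref{compare}) between the motivic integrals of Section \ref{sec9} and the dimensional integrals developed in Sections \ref{sec:definable}--\ref{sec9}, so that it suffices to prove the equality at the level of dimensional integrals in $\ZZZ$.

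First I would set up Fubini. The quotient map $f : G \to A$ is a $T$-torsor, hence a smooth surjective morphism between smooth $K$-varieties. The canonical splitting $\Lie(G) \otimes_R K \cong \Lie(T) \otimes_R K \oplus \Lie(A) \otimes_R K$ identifies $\omega_T \otimes \omega_A$ with a top-degree translation-invariant form on $G$, and locally we may write $\omega_T \otimes \omega_A = \alpha \wedge f^{*}\omega_A$ with $\alpha$ a $(\dim T)$-form whose restriction $\alpha_y$ to each fiber $G_y$ is $T$-invariant. Since $\omega_A$ is a gauge form on $A$, Theorem \ref{gf} applied with $\psi$ the constant function $0 \in \ZZZ$ yields
\[
\int_{G(K)} |\omega_T \otimes \omega_A| \;=\; \int_{A(K)} \varphi\,|\omega_A|, \qquad \varphi(y) \;=\; \int_{G_y(K)} |\alpha_y|,
\]
with $\varphi \in \cD(A(K))$.

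Next I would analyze $\varphi$ pointwise. Let $B := f(G(K)) \subseteq A(K)$. For each $y \in B$, choose $g_y \in G_y(K)$; translation by $g_y$ is a definable $K$-analytic isomorphism $\mu_{g_y} : T \to G_y$, and $\mu_{g_y}^{*}(\alpha_y) = \omega_T$ because both forms are $T$-invariant and coincide at the identity via the Lie algebra splitting. The change of variables formula (Proposition \ref{nd}) then gives $\varphi(y) = \int_{T(K)}|\omega_T|$. For $y \notin B$, the fiber $G_y(K)$ is empty and $\varphi(y) = -\infty$. Setting $c := \int_{T(K)}|\omega_T|$, we obtain $\varphi = c \odot \mathbf{1}_B$, and the projection formula (Corollary \ref{proj}) yields
\[
\int_{G(K)} |\omega_T \otimes \omega_A| \;=\; c \odot \int_{B} |\omega_A|.
\]
By Proposition \ref{prop-vdim} (translated via Proposition \ref{compare}), both $\int_{T(K)}|\omega_T|$ and $\int_{A(K)}|\omega_A|$ have virtual dimension $0$, so the whole question reduces to proving $\vdim \int_{B}|\omega_A| = 0$. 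The key point is that $f$ is a group homomorphism, so $B$ is a subgroup of $A(K)$; smoothness of $f$ makes $f$ open on $K$-points, so $B$ is actually clopen in the $K$-analytic topology. Since $A$ is bounded, $A(K) = \mathcal{A}(R)$ for the finite-type Néron model $\mathcal{A}$, which is compact over the complete ring $R$, whence $[A(K):B] < \infty$. Decomposing $A(K)$ into finitely many translates of $B$ and invoking translation-invariance of the dimensional integral gives $\vdim \int_{B}|\omega_A| = \vdim \int_{A(K)}|\omega_A| = 0$, which concludes the proof.

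The main obstacle I expect is the fiberwise identification $\mu_{g_y}^{*}(\alpha_y) = \omega_T$: matching the Gelfand-Leray residue of the invariant form $\omega_T \otimes \omega_A$ with the Haar form $\omega_T$ on each twisted torsor fiber is conceptually clean but requires careful bookkeeping through the Lie algebra splitting, and one must check that the resulting $\varphi$ really does have the claimed form as an element of $\cD(A(K))$ (for which Theorem \ref{gf} already guarantees definability). A secondary technical point is the clopen-subgroup-of-finite-index step, which relies on compactness of $A(K)$ via the finite-type Néron model and is standard once boundedness of $A$ is invoked.
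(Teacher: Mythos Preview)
Your overall strategy coincides with the paper's: reduce via Proposition \ref{compare} to dimensional integrals, apply Global Fubini (Theorem \ref{gf}) to $f:G\to A$, use translation invariance on fibers to identify the fiberwise integral with $\int_{T(K)}|\omega_T|$, and then invoke the projection formula. Up to this point your argument and the paper's are essentially identical.

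The divergence, and the genuine gap, is in handling $B=f(G(K))\subset A(K)$. The paper simply quotes $H^1(K,T)=0$ (Chai, Lemma~4.3), valid because $K$ is complete of characteristic zero with algebraically closed residue field; this gives $B=A(K)$ outright and finishes the proof. Your alternative, arguing that $[A(K):B]<\infty$ via compactness of $\mathcal{A}(R)$, fails in the present setting: since $k$ is algebraically closed it is infinite, so $R$ is \emph{not} locally compact and $\mathcal{A}(R)$ is not compact (for instance $R$ itself admits no finite subcover by balls of fixed radius). Thus the ``clopen subgroup of a compact group has finite index'' step does not apply. The remainder of your finite-index reduction (decomposing $A(K)$ into translates of $B$ and using linearity plus change of variables) would be fine \emph{if} finite index were established, but the justification you give for finite index is invalid. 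The fix is exactly what the paper does: invoke the vanishing of $H^1(K,T)$, which in fact yields the stronger conclusion $B=A(K)$.
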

We will prove Theorem \ref{holds} in Section \ref{subsec-proof},
using the model-theoretic approach to motivic integration in
\cite{mimix} and a new result on dimensions of motivic integrals
(Theorem \ref{badim}). As a corollary, we obtain a new proof of
the following theorem of Chai.
\begin{theorem}[Chai]\label{MT} Let
$$0\to T\to G\to A\to 0$$
be an exact sequence of semi-abelian $K$-varieties, with $T$ a
torus. Assume that $K$ is of characteristic zero. Then
$$c(G)=c(T)+c(A).$$
\end{theorem}
\begin{proof}This follows at once from Theorem \ref{thm-fubini}
and Theorem \ref{holds}.
\end{proof}

\begin{remark}\label{rem-fubini}
\change{
Theorem \ref{holds} is not a direct corollary of the Fubini theorem in
\cite{mimix} and the above results.
We need to combine the Fubini result \cite[12.5]{mimix} with the new result in
Theorem \ref{badim} and its corollary below, which compares dimensions of motivic parameter integrals under rather general conditions. By the lack of a definable section for the morphism $G\to A$ as in Theorem \ref{holds}, the motivic integral of $|\omega_G|$ over $G$ may not be equal to the product of the integrals of $|\omega_T|$ over $T$ and of $|\omega_A|$ over $A$. By the corollary to Theorem \ref{badim} and by the change of variables in \cite[12.4]{mimix}, this product survives at the rough level of virtual dimensions, which is sufficient to prove Theorem \ref{holds}. }
\end{remark}

\section{A comparison result for the dimensions of motivic integrals}

In this section we will work with a specific context falling under \cite{mimix} and define the dimension of motivic constructible functions at each point. These functions play an important role in motivic integration and in general Fubini results of \cite{mimix} and \cite{CL}. In order to control dimensions as desired for equation \eqref{eq-fubini} in Theorem \ref{thm-fubini}, we will compare the dimensions of the integrals of possibly different functions $F$ and $G$, when we are given that $F$ and $G$ have the same dimension in every point. Theorem \ref{badim} provides such a comparison result with parameters, and its corollary gives a similar comparison result for integrals on an algebraic variety with a volume form.


\subsection{Dimensions of motivic constructible functions}
We suppose in this section that $R$ is a complete discrete
valuation ring of characteristic zero with quotient field $K$ and
algebraically closed residue field $k$ of characteristic $p\geq
0$. We fix a uniformizer $\varpi$ in $K$. We will use some
terminology and results from \cite{mimix}, with precise references. Let $\cL_{\rm
high}(K)$ be the Denef-Pas language $\cL_{\rm high}$ as in \cite[\S2.3]{mimix} enriched with coefficients from $K$, and where the angular component maps $\ac_n$ for $n>0$ are given by $\ac_n(x)=x\varpi^{-\ord x}\bmod (\varpi)^n$ for nonzero $x\in K$. Let $\cT$ be the $\cL_{\rm
high}(K)$-theory of $K$.

Since $\cT$ falls under the combined Examples 1 and 4 of
\cite[\S3.1]{mimix}, we can use the theory of motivic integration of \cite{mimix}. Also, since $\cT$ is a complete theory, any definable subassignment $X$ is uniquely determined by the definable set $X(K)$, with notation from \cite[\S4.1]{mimix}. We will sometimes say definable set instead of definable subassignment.


We first define how to take (virtual) dimensions of several objects appearing in \cite{mimix}.
Write $R_n$ for the ring $R/(\varpi^n)$ and $\varpi_{n}$ for the
image of $\varpi$ in $R_n$. Let $\cL_{r}$ be the multisorted
language with sorts $R_n$ for integers $n>0$, on each $R_n$ the ring
language with coefficients from $R_n$, and with the natural
projection maps $p_{n,m}:R_n \to R_m$ for $n\geq m$. It follows from the quantifier elimination results of \cite{Pas} and \cite{Past} that any $\cL_{\rm high}(K)$-definable set $X\subset \prod_{i=1}^s R_{n_i}$ is already $\cL_r$-definable, see also \cite[Theorem 3.10]{mimix}.
To each $\cL_r$-definable set $X\subset \prod_{i=1}^s R_{n_i}$, we
associate an $\cL_{\rm ring}(k)$-definable set $\delta(X)$ as follows. If $R$
has mixed characteristic, the projection $p_{n,1}$ induces a
bijection from the set of $p^{n}$-th powers in $R_n$ to $k$, by
Hensel's Lemma, Newton's binomium, and the hypotheses on $K$. Let
us write $P_{p^n}$ for the set of $p^{n}$-th powers in $R_n$. Then
any $x$ in $R_n$ can be written uniquely as
$$
\sum_{i=0}^{n-1}  x_i \varpi_{n}^i,
$$
with $x_i\in P_{p^n}$, yielding a bijection $R_n\to k^n:x\mapsto (p_{n,1}(x_i))_i$ which is,
in fact, $\cL_{r}$-definable. If $R$ has equal characteristic
zero,
 we choose a retraction $k\to R$ of the ring morphism $R\to k$. This choice determines
 an isomorphism $R\to k[[\varpi]]$, and we identify $R_n\cong k[\varpi]/(\varpi^n)$ with the $k$-vector space $k^n$
 by means of the basis $1,\varpi,\ldots,\varpi^{n-1}$ of
$R_n$. In both cases, we obtain a bijection $\prod_{i=1}^s
R_{n_i}\to k^N$ with $N=\sum_{i=1}^s n_i$. This identification maps the $\cL_r$-definable subset $X$ of $\prod_{i=1}^s R_{n_i}$ onto an $\cL_{\rm ring}(k)$-definable subset of $k^{N}$
that we denote by $\delta(X)$.

Recall from \cite[\S7.1]{mimix} that $\cC_+({\rm Point})$ is the Grothendieck semi-ring of $\cL_{\rm
high}(K)$-definable subsets of Cartesian products of the form $\prod_{i=1}^s R_{n_i}$ up to definable isomorphisms, with scissor relations, with zero-element $[\emptyset]$, and localized with respect to $\LL$ and the elements $\LL^{i}-1$ for all $i>0$, where $\LL$ stands for the class of the affine line over $k$.
Clearly, $\delta$ induces a semi-ring morphism
$$
\cC_+({\rm Point}) \to \Mr,
$$
which we also denote by $\delta$. Recall that objects in $\Mr$ have a dimension by Definition \ref{def-dim}.


For an $\cL_{\rm
high}(K)$-definable set $Z$, $\cC_+(Z)$ is a relative variant of $\cC_+({\rm Point})$ over $Z$, see \cite[\S7.1]{mimix}. An object $\varphi\in \cC_+(Z)$ is called a motivic constructible function on $Z$. Moreover, for every $z\in Z(K)$, there is the evaluation map $i_z^*:\cC_+(Z) \to \cC_+({\rm Point})$ at $z$, and $i_z^*(\varphi)$ is called the evaluation of $\varphi$ at $z$. For an $\cL_{\rm high}(K)$-definable set $Z$, a point $z\in Z(K)$, and a function $\varphi$ in $\cC_+(Z)$, the dimension of $\varphi$ at $z$ is defined as $\dim (\delta( i_z^*(\varphi) ) )$ and is denoted by $\dim_z(\varphi)$. If $Z$ is the point and $\varphi\in\cC_+({\rm Point})$, we write $\dim(\varphi)$ instead of $\dim_{\rm Point}(\varphi)$.

\subsection{A comparison result}

In this section definable will mean for the language $\cL_{\rm
high}(K)$.
Recall that, for definable sets $X$, $Y$ and $Z\subset X\times Y$, under integrability conditions in the fibers of the projection $Z\to X$, called $X$-integrability, one can integrate $\varphi\in\cC_+(Z)$ in the fibers of the projection $Z\to X$ to obtain a function $\mu_{/X}(\varphi)$ in $\cC_+(X)$, see \cite[9.1]{mimix}. The method of \cite{CL} and \cite{mimix} for calculating integrals goes back to ideas by Denef in \cite{D84} in the $p$-adic case and to Pas \cite{Pas}, \cite{Past} in a pre-motivic setting.

Now we can state and prove our comparison result, stating that the
dimension of a motivic integral only depends on the dimensions of
the values of the integrand at each point.

\begin{theorem}\label{badim}
Let $F$ and $G$ be in $\cC_+(Z)$ and suppose that $Z\subset X\times Y$ for some definable sets $X$, $Y$ and $Z$. Suppose that $F$ and $G$ are $X$-integrable and
that
$$
\dim_z ( F  )  = \dim_z  ( G )\mbox{ for each point $z$ on $Z(K)$}.
$$
Then one has
$$
\dim_x ( \mu_{/X}(F)) = \dim_x ( \mu_{/X}(G) ) \mbox{ for each point $x$ on $X(K)$}.
$$
\end{theorem}
\begin{proof}
By projecting one variable at the time and by iterating the one
variable result, it suffices to consider the case where
two of the three values $n,m$, and $r$, are zero, and either $n=1$, $r=1$, or $m=1=(1)$. By the cell decomposition
theorem of \cite{Pas}, \cite{Past}, we may suppose that $n=0$. Indeed, via cell
decomposition, each integral over a valued field variable is
precisely calculated as a sum over $\ZZ$-variables and a
subsequent integral over residue ring variables, see \cite[\S8]{mimix}.

Recall that $F$
is a finite sum of terms of the form $a_i \otimes b_i$, with
$a_i\in \cP_+(Z)$ and $b_i\in \cQ_+(Z)$, and similarly for $G$, with notation from \cite[\S7.1]{mimix}. (The semi-ring $\cP_+(Z)$ is related to the value group, $\cQ_+(Z)$ to the residue field, and $\cC_+(Z)$ is a tensor product of both.)

If $n=r=0$, then we may suppose that the $a_i$ lie in $\cP_+(X)$, and
similarly for $G$, by Proposition 7.5 of \cite{mimix}. The result
of the theorem now follows from the definition in
\cite[\S6]{mimix} of $\mu_{/X}$ in this case and the following simple comparison property for dimensions
of constructible sets $A_i\subset \AA_k^{n_i}$. If, for certain
morphisms $f_i: \AA_k^{n_i}\to \AA_k^{n_3}$ for $i=1,2$, one has
that $f_1(A_1)=f_2(A_2)$ and for each $x\in \AA_k^{n_3}(k)$, the
dimension of $f_1^{-1}(x)\cap A_1$ equals the dimension of
$f_2^{-1}(x)\cap A_2$, then one has $\dim(A_1)=\dim(A_2)$.

Let us finally consider the case that $n=m=0$ and $r=1$. In this
case we may suppose that the $b_i$ lie in $\cQ_+(X)$, and
similarly for $G$, again by Proposition 7.5 of \cite{mimix}. In
the considered case, the theorem follows from the definition of
$\mu_{/X}$ of \cite{mimix} and the following two observations.
 For any $a\in \AA$, where $a=a(\LL)$ is thus a rational function in
$\LL$ of a specific kind, one has that $\vdim (a)$ equals the
degree of the rational function $a(\LL)$, where the degree of a
rational function is the degree of its numerator minus the degree
of its denominator, and where the degree of $0$ is defined as
$-\infty$. Secondly, there is the following elementary comparison property for the degrees of rational functions.
Consider, for each $i\in\ZZ$, an integer $n_i\geq 0$,
and a polynomial $a_i(x)$ over $\ZZ$ in one variable $x$ such that
$a_i(q)\geq 0$ for each real $q>1$. If there is a
rational function $r(x)$ such that $\sum_{i\in\ZZ} a_i(q)/q^{n_i}$
converges and equals $r(q)$ for each real $q>1$, then the
following equality holds
$$
\deg ( r(x) ) = \max_i \deg \big( { \frac{a_i(x)}{x^{n_i}} }
\big),
$$
where $\deg$ stands for the degree.
Since summation of non-negative functions over $\ZZ$ in \cite[\S5]{mimix} is calculated and defined by considering specific sums of rational functions in $\LL$ and by evaluating in real numbers $q>1$, the result follows.
\end{proof}

By working with affine charts over $K$ one may consider a variety $V$ over $K$ as a definable subassignments, and one defines $\cC_+(V)$ correspondingly, see \cite[\S12.3]{mimix}. Let us write  $\int^{\rm CL}$ for the integral as defined in \cite[\S12.3]{mimix}, to distinguish with the integrals from section \ref{subsec-motint} of this paper. The definition of these integrals in \cite[\S12.3]{mimix} is based on finite affine covers of $V$ over $K$, on finite additivity for motivic integrals, and on the change of variables formula.

\begin{cor}\label{corbadim}
Let $V$ be an algebraic variety over $K$ with a volume form $\omega_V$. Let $F$ and $G$ be integrable functions in $\cC_+(V)$ such that, for each $x\in V(K)$, one has $\dim_x ( F)  = \dim_x(G)$. Then their integrals have the same dimension:
$$
\dim (\int^{\rm CL}_V F|\omega_V| ) = \dim (\int^{\rm CL}_V G |\omega_V| ).
$$
\end{cor}
\begin{proof}
This follows immediately from Theorem \ref{badim} and the definition of the integrals $\int^{\rm CL}$ in \cite[\S12.3]{mimix}.
\end{proof}

\subsection{Gelfand-Leray residues}

 Let $f : X \to Y$ be a smooth morphism between smooth
equidimensional varieties over $K$. Let $m$ be the dimension of
$Y$ and let $m+n$ be the dimension of $X$. Let $\omega_X$ and
$\omega_Y$ be differential forms of maximal degree on $X$ and $Y$,
respectively. Assume that $\omega_Y$ is a gauge form, that is, a
generator of the line bundle $\Omega^m_{Y/K}$ at each point of
$Y$.

 Since $f$ is smooth, the fundamental sequence of locally free
 coherent $\mathcal{O}_X$-modules
 $$0\to f^*\Omega^1_{Y/K}\to \Omega^1_{X/K}\to \Omega^1_{X/Y}\to
 0$$ is exact \cite[17.2.3]{ega4.4}. Taking maximal exterior
 powers, we obtain an isomorphism
 $$f^*\Omega^m_{Y/K}\otimes \Omega^n_{X/Y}\to
 \Omega^{m+n}_{X/K}.$$ Locally, this isomorphism is defined by
 $$\phi\otimes \eta\mapsto \phi\wedge
 \widetilde{\eta}$$ where $\widetilde{\eta}$ is any lift of $\eta$
 to $\Omega^n_{X/K}$. Since $f^*\omega_Y$ generates the line
 bundle
 $f^*\Omega^m_{Y/K}$, we obtain an isomorphism
 $$\Omega^n_{X/Y}\to
 \Omega^{m+n}_{X/K}$$ that is locally defined by
 $$\eta\mapsto f^*\omega_Y\wedge \widetilde{\eta}.$$ The inverse
 image of $\omega_X$ under this isomorphism is called the
 Gelfand-Leray form associated to $\omega_X$ and $\omega_Y$, and  denoted by $\omega_X/\omega_Y$. It induces a differential form
 of maximal degree on each of the fibers of $f$.

\subsection{Proof of Theorem \ref{holds}}\label{subsec-proof}

Finally, we can prove Theorem \ref{holds}. It follows from
Proposition \ref{prop-vdim} that
$$\vdim \int_T |\omega_T|=\vdim \int_A |\omega_A|=0.$$
It is proven in Proposition 12.6 of \cite{mimix} that the theory
of motivic integration developed there can be used to compute the
motivic integrals defined by the formula \eqref{eq-motint}. In
particular, the dimensions of the respective motivic integrals are
the same, so that we can use the corollary of Theorem \ref{badim} and the results
in \cite{mimix} to prove Theorem \ref{holds}.

We denote the projection morphism $G\to A$ by $f$. Since $H^1 (K,
T)=0$ by \cite[4.3]{chai}, the map $f(K): G (K)\to A(K)$ is
surjective. For every $a\in A(K)$, we set $G_a=f^{-1}(a)$.  If we
choose a point $x$ in $G_a(K)$, then the multiplication by $x$
defines an isomorphism $\tau_x:T\to G_a$. Since the relative
differential form $(\omega_{{T}}\otimes \omega_{{A}} ) /
\omega_{A}$ on $G$ is invariant under translation, the pullback
through $\tau_x$ of its restriction to $G_a$ equals $\omega_T$.
Thus, by the change of variables formula in \cite[12.4]{mimix}, we
have
$$
\int^{\rm CL}_{G_a} | ( \omega_{{T}}\otimes \omega_{{A}} ) /
\omega_{{A}} | =  \int^{\rm CL}_T |\omega_T|
$$
 for each $a$ in $A(K)$.
Hence, by the Fubini property in
  \cite[12.5]{mimix}, we find that
$$ \int^{\rm CL}_{G} | \omega_{{T}}\otimes \omega_{{A}} | =  \int^{\rm CL}_{A} \psi |\omega_A|
$$
where $\psi$ is a motivic constructible function on $A$ such that $\dim_a(\psi)=0$ for each $a\in A(K)$. Now Theorem \ref{corbadim} with $V=A$
implies that
$$\vdim \int^{\rm CL}_{A} \psi |\omega_A|=\vdim \int^{\rm CL}_A|\omega_A|=0.$$
Combining the above equations, we find
$$
\dim \int_{G} | \omega_{{T}}\otimes \omega_{{A}} | =  \dim \int^{\rm CL}_{G} | \omega_{{T}}\otimes \omega_{{A}} | = 0,
$$
which concludes the proof of Theorem \ref{holds}.


\bibliographystyle{amsplain}

\end{document}